\journalname{}
\newcommand{\reals}{\mathbb R}
\newcommand{\nnreals}{\mathbb R_{\geq 0}}
\newcommand{\nnintegers}{\mathbb Z_{\geq 0}}
\newcommand{\minimize}{\mathrm{minimize}}
\newcommand{\subjectto}{\mathrm{subject \ to}}
\newcommand{\tand}{\mathrm{\ and \ }}
\newcommand{\tforall}{\mathrm{ \ for \ all \ }}
\newcommand{\conv}{\mathrm{conv}}
\newcommand{\cl}{\mathrm{cl}}
\newcommand{\cB}{\mathcal B}
\newcommand{\cC}{\mathcal C}
\newcommand{\cE}{\mathcal E}
\newcommand{\cF}{\mathcal F}
\newcommand{\cH}{\mathcal H}
\newcommand{\cI}{\mathcal I}
\newcommand{\cK}{\mathcal K}
\newcommand{\cL}{\mathcal L}
\newcommand{\cT}{\mathcal T}
\newcommand{\cX}{\mathcal X}
\newcommand{\cY}{\mathcal Y}
\newcommand{\cU}{\mathcal U}
\newcommand{\cV}{\mathcal V}
\newcommand{\cW}{\mathcal W}
\newcommand{\bzero}{\bm 0}
\newcommand{\bb}{\bm b}
\newcommand{\bc}{\bm c}
\newcommand{\bd}{\bm d}
\newcommand{\bp}{\bm p}
\newcommand{\bq}{\bm q}
\newcommand{\bx}{\bm x}
\newcommand{\by}{\bm y}
\newcommand{\bz}{\bm z}
\newcommand{\bC}{\bm C}
\newcommand{\bin}{^{\mathrm{bin}}}
\newcommand{\sub}{_{\mathrm{sub}}}
\newcommand{\pat}{_{\mathrm{path}}}
\newcommand{\tour}{_{\mathrm{tour}}}
\newcommand{\arb}{_{\mathrm{arb}}}
\newcommand{\assig}{_{\mathrm{assig}}}
\newcommand{\match}{_{\mathrm{match}}}
\newcommand{\incident}[1]{\delta_#1}
\newcommand{\incoming}[1]{\delta_#1^-}
\newcommand{\outgoing}[1]{\delta_#1^+}
\newtheorem{assumption}{Assumption}
\definecolor{codegreen}{rgb}{0,0.6,0}
\definecolor{codepurple}{rgb}{0.58,0,0.82}
\definecolor{backcolour}{rgb}{0.97,0.97,0.95}
\lstdefinestyle{python}{
backgroundcolor=\color{backcolour},   
commentstyle=\color{codegreen},
keywordstyle=\color{magenta},
numberstyle=\ttfamily\color{gray},
stringstyle=\color{codepurple},
basicstyle=\small\ttfamily,
breakatwhitespace=false,         
breaklines=true,                 
captionpos=b,                    
keepspaces=true,                 
showspaces=false,                
showstringspaces=false,
showtabs=false,                  
tabsize=2,
language=Python,
morekeywords=[1]{as,assert,nonlocal,with,yield,self,True,False,None},
}
\lstdefinestyle{bash}{
	backgroundcolor=\color{backcolour},
	basicstyle=\small\ttfamily\color{black},
	keywordstyle=\color{termcommand}\bfseries,
	commentstyle=\color{gray}\itshape,
	showstringspaces=false,
	breaklines=true,
	breakatwhitespace=false,
	tabsize=2,
	frame=single,
	rulecolor=\color{backcolour},
	morekeywords={\$}, 
}
\begin{document}

\title{A Unified and Scalable Method for \\ Optimization over Graphs of Convex Sets
\thanks{The material presented in this paper is partially based on the author's PhD thesis~\cite{marcucci2024graphs}.}}
\titlerunning{Solving Optimization Problems over Graphs of Convex Sets}
\author{Tobia Marcucci}
\institute{T. Marcucci \at
University of California, Santa Barbara \\
Department of Electrical and Computer Engineering \\
\email{marcucci@ucsb.edu}
}

\date{\today}

\maketitle

\begin{abstract}
A Graph of Convex Sets (GCS) is a graph in which vertices are associated with convex programs and edges couple pairs of programs through additional convex costs and constraints.
Any optimization problem over an ordinary weighted graph (e.g., the shortest-path, the traveling-salesman, and the minimum-spanning-tree problems) can be naturally generalized to a GCS, yielding a new class of problems at the interface of combinatorial and convex optimization with numerous applications.
In this paper, we introduce a unified method for solving any such problem.
Starting from an integer linear program that models an optimization problem over a weighted graph, our method automatically produces an efficient mixed-integer convex formulation of the corresponding GCS problem.
This formulation is based on homogenization (perspective) transformations, and the resulting program is solved to global optimality using off-the-shelf branch-and-bound solvers.
We implement this framework in \texttt{GCSOPT}, an open-source and easy-to-use Python library designed for fast prototyping.
We illustrate the versatility and scalability of our approach through multiple numerical examples and comparisons.

\keywords{Combinatorial optimization \and Graph optimization \and Convex optimization \and Mixed-integer convex optimization \and Perspective formulation}
\subclass{90-08 \and 90-04 \and 90-10}
\end{abstract}

\newpage

\section{Introduction}
\label{sec:intro}

Many problems in combinatorial optimization and graph theory can be stated as follows:
\begin{multline}
\label{eq:word_statement}
\text{\emph{Given a weighted graph $G$ and a set $\cH$ of admissible subgraphs,}} \\
\text{\emph{find a minimum-weight subgraph $H \in \cH$.}}
\end{multline}
For example, in the Shortest-Path Problem (SPP), the set $\cH$ of admissible subgraphs consists of all paths between two fixed vertices in $G$.
In the Traveling-Salesman Problem (TSP), $\cH$ consists of all cycles that visit each vertex exactly once.
In the Minimum-Spanning-Tree Problem (MSTP), $\cH$ is the set of all trees that reach every vertex.
This paper addresses a generalization of problem~\eqref{eq:word_statement} where the weighted graph is replaced by a Graph of Convex Sets (GCS).

A GCS is a graph in which each vertex is paired with a convex program, defined by a continuous variable, a convex constraint set, and a convex objective function.
The edges of a GCS link pairs of these programs through additional convex costs and constraints.
Any discrete optimization problem of the form~\eqref{eq:word_statement} is naturally generalized to a GCS.
Indeed, if we fix the continuous variables of the GCS, we obtain a weighted graph over which we can solve problem~\eqref{eq:word_statement}.
The challenge in a GCS problem is to optimize the continuous variables so that the resulting discrete problem achieves the smallest optimal value.
This leads to a new class of problems at the interface of combinatorial and convex optimization with many applications.

The GCS framework has been recently introduced in~\cite{marcucci2024shortest} with a focus on the SPP, and has since been applied to a wide range of real-world robotics problems~\cite{marcucci2023motion,kurtz2023temporal,cohn2024non,graesdal2024towards,philip2024mixed,morozov2024multi,natarajan2024implicit,morozov2025mixed,wei2025hierarchical,lin2025towards}.
The first contribution of this paper is to extend the approach presented in~\cite{marcucci2024shortest} for solving the SPP in GCS to any GCS problem.
The input of our method is the formulation of problem~\eqref{eq:word_statement} as an Integer Linear Program (ILP).
These ILPs have been deeply studied, and are readily available in combinatorial-optimization textbooks~\cite{schrijver2003combinatorial,korte2018combinatorial,papadimitriou1998combinatorial,nemhauser1999integer,conforti2014integer}.
They typically have one binary variable for each vertex and edge in the graph.
The role of these variables is to take unit value if the corresponding vertex or edge is part of the subgraph $H$, and evaluate to zero otherwise.
Our method translates this ILP into an efficient Mixed-Integer Convex Program (MICP) that models the corresponding GCS problem.
This translation is based on homogenization transformations (also known as perspective formulations): a popular tool in mixed-integer programming~\cite{ceria1999convex,stubbs1999branch,frangioni2006perspective,gunluk2010perspective,moehle2015perspective} that allows us to activate and deactivate the convex costs and constraints in a GCS using the ILP binary variables.

Our MICPs are reliably solved with standard branch-and-bound solvers, which either return a globally optimal solution or certify infeasibility.
This process is facilitated by the small number of variables and constraints in our programs, and is especially effective when the original ILP is well approximated by its convex relaxation, since this property is typically inherited by our MICP.

Our second contribution is \texttt{GCSOPT}, an open-source Python library for formulating and solving GCS problems, designed for ease of use and fast prototyping.
\texttt{GCSOPT} provides a high-level interface for defining GCS vertices and connecting them with edges.
Convex sets and functions are specified using the syntax of \texttt{CVXPY}~\cite{diamond2016cvxpy}: a popular Python library for convex optimization.
Internally, these sets and functions are translated into conic form, enabling a straightforward computation of their homogenization.
The MICP is constructed automatically and shipped to state-of-the-art solvers (e.g., \texttt{Gurobi}, \texttt{Mosek}, or \texttt{CPLEX}), then the GCS variables are populated with their optimal value.
Thanks to this automation, users of \texttt{GCSOPT} do not need any significant expertise in mixed-integer programming: a basic familiarity with graphs and convex optimization is sufficient to model and solve complex GCS problems.

We demonstrate the broad applicability of our framework through a wide range of numerical examples.
We also show that our method consistently outperforms alternative approaches for solving GCS problems globally.

\subsection{Related works}

The GCS framework is closely related to several well-studied problems in graph theory and combinatorial optimization.
Here we review these connections, emphasizing similarities and key differences.

\subsubsection{Graph-structured convex optimization}

When the only admissible subgraph is the entire graph, $\cH = \{G\}$, a GCS problem reduces to a purely continuous convex program with graph structure.
Such programs are the focus of the software libraries \texttt{SnapVX}~\cite{hallac2017snapvx} and \texttt{Plasmo.jl}~\cite{jalving2022graph}.
The former is a solver based on the alternating-direction method of multipliers, and the latter is a modeling and solution framework.
In this paper, we add an extra layer of complexity by posing combinatorial questions on top of these graph-structured convex programs.

\subsubsection{Graph problems with neighborhoods}

Variants of classical graph optimization problems, in which vertices can be selected within prescribed continuous sets, are known in the literature as problems \emph{with neighborhoods}.
Examples are the TSP~\cite{arkin1994approximation,gudmundsson1999fast,dumitrescu2003approximation,de2005tsp,gentilini2013travelling,puerto2024hampered}, the MSTP~\cite{yang2007minimum,disser2014rectilinear,dorrigiv2015minimum,blanco2017minimum,blanco2025fixed}, the SPP~\cite{disser2014rectilinear}, the Facility-Location Problem (FLP)~\cite{blanco2019ordered,brimberg2002locating}, and matching problems~\cite{espejo2022minimum} with neighborhoods.
These problems have been approached either through approximation algorithms~\cite{arkin1994approximation,mata1995approximation,dumitrescu2003approximation,fourney2024mobile} or by formulating them as Mixed-Integer Nonconvex Programs (MINCP) that can be solved exactly~\cite{gentilini2013travelling,blanco2017minimum,fourney2024mobile}.
However, the former algorithms are typically limited to planar neighborhoods with simple shape (e.g., lines, circles, or rectangles), while the latter rely on spatial branch-and-bound algorithms, which have significantly improved in recent years, but still struggle with large-scale problems (see also the comparison in Section~\ref{sec:comparison}).

GCS problems generalize most common graph problems with neighborhoods, since our objective functions and constraints need only be convex.
As already shown by the many robotics applications of the SPP in GCS, our MICPs can scale to large graphs and high-dimensional spaces.
For instance, the motion-planning problems in~\cite{marcucci2023motion} involve up to $2{,}500$ vertices, $5{,}000$ edges, and vertex variables in $70$ dimensions.
Additionally, the method proposed in this paper is unified and can solve any GCS problem, as opposed to the works above that are tailored to a specific graph problem with neighborhoods.

\subsubsection{Generalized network-design and Steiner problems}
	
Generalized network-design problems~\cite{feremans2003generalized,pop2012generalized} and generalized Steiner problems~\cite{dror2000generalizedsteiner} can be viewed as discrete versions of graph problems with neighborhoods.
The vertex set is divided into clusters, and the constraints are enforced at the cluster level rather than on individual vertices.
For example, the generalized TSP has been studied in~\cite{noon1993efficient,fischetti1995symmetric}, the MSTP in~\cite{myung1995generalized,dror2000generalizedspanning,feremans2004generalized}, the SPP in~\cite{li1995shortest}, the vehicle routing problem in~\cite{ghiani2000efficient}, and graph coloring in~\cite{li2000partition,demange2015some}.
GCS problems admit a natural discretization as graph problems with clusters.
However, this approximation quickly becomes impractical in high dimensions, where dense sampling is infeasible, or in problems where equality constraints make the feasible set lower-dimensional and difficult to discretize.

\subsection{Outline}

This paper is organized as follows.
In Section~\ref{sec:statement}, we give a formal statement of the GCS problem.
Section~\ref{sec:ilp} contains background material on ILP formulations of discrete optimization problems over graphs.
Our MICP formulation is introduced in Section~\ref{sec:method} and further refined in Section~\ref{sec:lemma_analysis}.
Section~\ref{sec:gcs_problems} illustrates multiple examples of GCS problems along with their MICP formulations.
Section~\ref{sec:nonlinear} describes how some simplifying assumptions made in the previous sections can be relaxed.
All numerical examples and comparisons are presented in Section~\ref{sec:examples}, while the Python library \texttt{GCSOPT} is described in Section~\ref{sec:gcsopt}.
Finally, Section~\ref{sec:conclusions} presents concluding remarks and directions for future work.

\section{Problem statement}
\label{sec:statement}

Let $G=(\cV, \cE)$ be a weighted graph, either directed or undirected, with vertex set $\cV$ and edge set $\cE \subset \cV^2$.
Let $c_v \in \reals$ and $c_e \in \reals$ denote the weights of vertex $v \in \cV$ and edge $e \in \cE$, respectively.
More formally, problem~\eqref{eq:word_statement} can be restated as what we refer to as a \emph{graph optimization problem}:
\begin{subequations}
\label{eq:graph_problem}
\begin{align}
\minimize \quad & \sum_{v \in \cW} c_v + \sum_{e \in \cF} c_e \\
\label{eq:graph_problem_constraint}
\subjectto \quad & H = (\cW, \cF) \in \cH.
\end{align}
\end{subequations}
Here the variable is the subgraph $H$, which has vertex set $\cW \subseteq \cV$ and edge set $\cF \subseteq \cW^2 \cap \cE$.
As in~\eqref{eq:word_statement}, the set $\cH$ consists of the admissible subgraphs of $G$ (e.g., paths, tours, or spanning trees).
The objective function is equal to the total weight of $H$, i.e., the sum of its vertex and edge weights.

A GCS is a generalization of an ordinary weighted graph.
Each vertex $v \in \cV$ of a GCS is paired with a convex program, which is defined by
\begin{itemize}
\item
a variable $\bx_v \in \reals^{n_v}$ of dimension $n_v > 0$,
\item
a closed convex constraint set $\cX_v \subseteq \reals^{n_v}$,
\item
a convex objective function $f_v : \reals^{n_v} \rightarrow \reals$.
\end{itemize}
Each edge $e=[v,w] \in \cE$ in a GCS (we use square brackets for edges that may be directed or undirected) couples the convex programs of vertices $v$ and $w$ through
\begin{itemize}
\item
a closed convex constraint set $\cX_e \subseteq \reals^{n_v + n_w}$,
\item
a convex objective function $f_e : \reals^{n_v + n_w} \rightarrow \reals$.
\end{itemize}
Additional assumptions on these sets and functions will be made in different sections of the paper.
A simple sufficient condition under which all our results apply is that the sets $\cX_v$ are bounded for all $v \in \cV$.

The generalization of problem~\eqref{eq:graph_problem} over a GCS reads as follows:
\begin{subequations}
\label{eq:gcs_problem}
\begin{align}
\label{eq:gcs_problem_obj}
\minimize \quad & \sum_{v \in \cW} f_v(\bx_v) + \sum_{e = [v,w] \in \cF} f_e(\bx_v, \bx_w) \\
\subjectto \quad
\label{eq:gcs_subgraph}
& H = (\cW, \cF) \in \cH, \\
\label{eq:gcs_problem_vertex}
& \bx_v \in \cX_v, && v \in \cW, \\
\label{eq:gcs_problem_edge}
& (\bx_v, \bx_w) \in \cX_e, && e = [v,w] \in \cF.
\end{align}
\end{subequations}
Here the variables are both the discrete subgraph $H$ and the continuous vectors $\bx_v$ for $v \in \cW$.
The first constraint is inherited from the graph optimization problem.
The second and third enforce the vertex and edge constraints for the selected subgraph $H$.
The objective function is equal to the total cost of $H$.

\begin{example}
Figure~\ref{fig:demo} shows an SPP, a TSP, and an MSTP in GCS.
The first problem is posed over a directed graph $G$ with $|\cV| = 9$ vertices and $|\cE|=12$ edges.
The other two involve an undirected graph with $9$ vertices and $16$ edges.
For each problem, the set $\cH$ consists of the admissible subgraphs described at the beginning of Section~\ref{sec:intro}.
In the SPP, the path is required to start at the bottom-left vertex and end at the top-right vertex.
For all problems and vertices $v \in \cV$, the constraint sets $\cX_v \subset \reals^2$ are circles of radius $0.3$ centered on a grid with integer coordinates.
Each edge $e=[v,w] \in \cE$ has the objective function $f_e(\bx_v, \bx_w) = \|\bx_w - \bx_v\|_2$.
There are no vertex costs or edge constraints, i.e., $f_v(\bx_v) = 0$ and $\cX_e = \reals^4$ for all $v \in \cV$ and $e \in \cE$.

\begin{figure}
\centering
\begin{subfigure}{0.32\textwidth}
\includegraphics[width=\columnwidth]{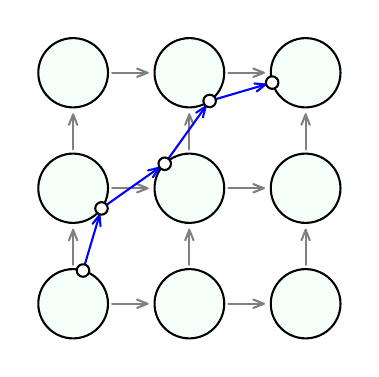}
\caption{SPP in GCS.}
\label{fig:demo_spp}
\end{subfigure}
\begin{subfigure}{0.32\textwidth}
\includegraphics[width=\columnwidth]{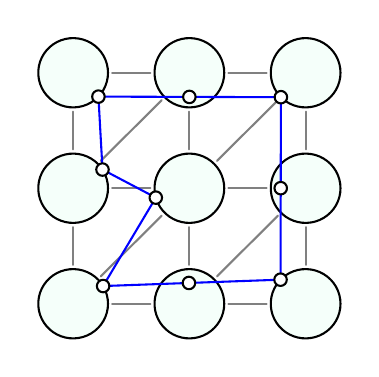}
\caption{TSP in GCS.}
\end{subfigure}
\begin{subfigure}{0.32\textwidth}
\includegraphics[width=\columnwidth]{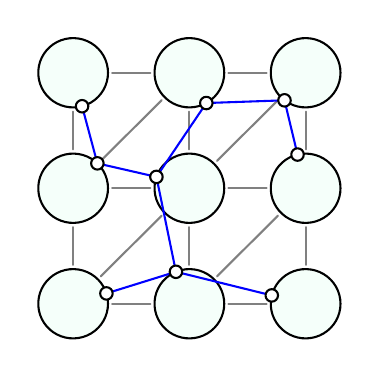}
\caption{MSTP in GCS.}
\label{fig:demo_mstp}
\end{subfigure}
\caption{
Examples of GCS problems.
The vertex constraint sets are circles and the edge costs penalize the Euclidean distance between the connected points.
}
\label{fig:demo}
\end{figure}
\end{example}

Figure~\ref{fig:demo} shows the optimal subgraphs $H=(\cW, \cF)$ for the three problems.
The optimal variables $\bx_v$ for $v \in \cW$ are depicted as white dots.
The selected edges $e \in \cF$ connect the corresponding white dots and are shown in blue.

\section{Graph optimization problems as integer linear programs}
\label{sec:ilp}

Integer programming offers a unified framework for solving graph optimization problems: once problem~\eqref{eq:graph_problem} is cast as an ILP, it can be reliably solved to global optimality with highly effective branch-and-bound solvers.
In this section, we examine some important properties of these ILPs that will play a central role in formulating the GCS problem~\eqref{eq:gcs_problem} as an MICP.

The first step in formulating problem~\eqref{eq:graph_problem} as an ILP is to parameterize the subgraph $H = (\cW, \cF)$ through its incidence (or characteristic) vector
$$
\by^H \in \{0,1\}^{\cV \cup \cE}.
$$
The entries of this vector are indexed by the elements of the set $\cV \cup \cE$, and have value
$$
y_v^H = \begin{cases}
1 & \text{if } v \in \cW \\
0 & \text{if } v \notin \cW
\end{cases}, \qquad
y_e^H = \begin{cases}
1 & \text{if } e \in \cF \\
0 & \text{if } e \notin \cF
\end{cases},
$$
for all $v \in \cV$ and $e \in \cE$.
Let
$$
\cY_\cH = \{\by^H : H \in \cH \}
$$
denote the set of incidence vectors that represent an admissible subgraph.
We enclose this set with a polytope $\cY \subseteq \reals^{\cV \cup \cE}$ whose binary-valued elements are exactly the admissible incidence vectors:
\begin{align}
\label{eq:incidence}
\cY \cap \{0,1\}^{\cV \cup \cE} =
\cY_\cH.
\end{align}
Using the shorthand notation $\cY\bin = \cY \cap \{0,1\}^{\cV \cup \cE}$ for the binary elements of $\cY$, problem~\eqref{eq:graph_problem} can be rewritten as the ILP
\begin{subequations}
\label{eq:graph_ilp}
\begin{align}
\label{eq:graph_ilp_obj}
\minimize \quad & \sum_{v \in \cV} c_v y_v + \sum_{e \in \cE} c_e y_e \\
\label{eq:graph_ilp_constr}
\subjectto \quad & \by \in \cY\bin.
\end{align}
\end{subequations}
The decision variable is the incidence vector $\by$, with the superscript $H$ omitted for simplicity.
The binary variables in the objective select the weights of the vertices and edges included in the subgraph.
This ILP has the same optimal value as problem~\eqref{eq:graph_problem}, and its optimal solutions $\by$ are the incidence vectors of the optimal subgraphs $H$.
The ILP convex relaxation is obtained simply by replacing the polytope $\cY\bin$ with $\cY$ in~\eqref{eq:graph_ilp_constr}.

\subsection{Compact versus strong formulations}

How do we choose the polytope $\cY$ in practice?
For any family $\cH$ of subgraphs, there are infinitely many polytopes $\cY$ satisfying the incidence condition~\eqref{eq:incidence}.
Among these, we seek one that has few facets and for which the inclusion
\begin{align}
\label{eq:perfect_formulation}
\conv (\cY_\cH) \subseteq \cY
\end{align}
is sufficiently tight, where $\conv$ denotes the convex hull.
A polytope $\cY$ with few facets yields a computationally light ILP~\eqref{eq:graph_ilp}, which we refer to as a \emph{compact} formulation.
While, when the inclusion~\eqref{eq:perfect_formulation} is tight, the ILP is well approximated by its convex relaxation and the branch-and-bound process converges quickly.
In this case, the formulation is said to be \emph{strong}.  
Furthermore, if the inclusion~\eqref{eq:perfect_formulation} holds with equality (i.e., is perfectly tight), then the ILP can be solved exactly through its convex relaxation, and the formulation is said to be \emph{perfect}.

Designing an ILP that is both compact and strong is sometimes impossible, as tightening the inclusion~\eqref{eq:perfect_formulation} might require adding many facets to $\cY$.
This trade-off is well understood for most graph optimization problems, and efficient ILP formulations can be found in standard textbooks~\cite{schrijver2003combinatorial,korte2018combinatorial,papadimitriou1998combinatorial,nemhauser1999integer,conforti2014integer} (see also Section~\ref{sec:gcs_problems} for a variety of examples).

\subsection{Nonnegative weights}

When the weights $c_v$ for $v \in \cV$ and $c_e$ for $e \in \cE$ are nonnegative, the incidence condition~\eqref{eq:incidence} can be relaxed to
\begin{align}
\label{eq:incidence_dominant}
\cY\bin + \nnintegers^{\cV \cup \cE} =
\cY_\cH + \nnintegers^{\cV \cup \cE},
\end{align}
where $\nnintegers$ is the set of nonnegative integers and  the plus symbols denote Minkowski sums.
In words, we ignore any difference that the sets $\cY\bin$ and $\cY_\cH$ might have in the positive directions, and ask only that the two sets have equal minimal elements (with respect to the partial order $\leq$).
Indeed, if the ILP~\eqref{eq:graph_ilp} is feasible, one of these minimal elements is necessarily optimal.

Similarly, when the weights are nonnegative, and we analyze the strength of an ILP formulation, we compare the \emph{dominants} of the two sets in~\eqref{eq:perfect_formulation}:
\begin{align}
\label{eq:perfect_formulation_dominant}
\conv(\cY_\cH) + \nnreals^{\cV \cup \cE} \subseteq
\cY + \nnreals^{\cV \cup \cE},
\end{align}
where $\nnreals$ are the nonnegative reals.
If this inclusion is tight or holds with equality, the ILP formulation is again said to be \emph{strong} or \emph{perfect}, respectively.

\subsection{Subgraph polytope}

Although the polytope $\cY$ is problem dependent, the incidence vector $\by$ must always represent a subgraph $H$ of $G$.
Hence, regardless of the signs of the weights, we can assume that $\cY$ is contained in the \emph{subgraph polytope}~\cite{conforti2015subgraph}:
\begin{align}
\label{eq:subgraph_polytope}
\cY \subseteq
\cY\sub =
\{\by \in [0,1]^{\cV \cup \cE} : y_v \geq y_e \tforall v \in \cV \tand e \in \incident{v}\},
\end{align}
where $\incident{v}$ is the set of edges incident with vertex $v$.
The inequality $y_v \geq y_e$ ensures that if a vertex $v$ is excluded from the subgraph $H$, so are its incident edges $e$.
Equivalently, if an edge $e =[v,w]$ is included in the subgraph $H$, so are the vertices $v$ and $w$.
The following bilinear equality encodes the same logical implication and will be used multiple times:
\begin{align}
\label{eq:binary_product}
& y_v y_e = y_e, && v \in \cV, \ e \in \incident{v}.
\end{align}
If $y_v = 0$ then $y_e = 0$ and if $y_e = 1$ then $y_v = 1$.

\section{Base mixed-integer convex formulation of the GCS problem}
\label{sec:method}

This section presents an initial MICP formulation that applies to any GCS problem~\eqref{eq:gcs_problem}.
Our construction extends the one proposed in~\cite{marcucci2024shortest} for the SPP in GCS.
First, we transform the ILP~\eqref{eq:graph_ilp} into an MINCP that models the corresponding GCS problem, but is impractical to solve due to the nonconvexity of its constraints.
Second, we design a convex relaxation that yields an equivalent, but easier to solve, MICP.
In Section~\ref{sec:lemma_analysis}, we will show how this base MICP can be automatically tailored to specific classes of GCS problems to improve its computational efficiency.

We will make two main simplifying assumptions.
\begin{assumption}[Bounded sets]
\label{ass:bounded}
For all vertices $v \in \cV$ and edges $e \in \cE$, the sets $\cX_v$ and $\cX_e$ are bounded.
\end{assumption}

\begin{assumption}[Linear objectives]
\label{ass:linear}
For all vertices $v \in \cV$ and edges $e = [v,w] \in \cE$, the functions $f_v$ and $f_e$ are linear, i.e., there exist vectors $\bc_v \in \reals^{n_v}$ and $\bc_e \in \reals^{n_v + n_w}$ such that $f_v(\bx_v) = \bc_v^\top \bx_v$ and $f_e(\bx_v, \bx_w) = \bc_e^\top (\bx_v, \bx_w)$.
\end{assumption}
While not essential, these assumptions simplify the exposition and avoid multiple technical subtleties.
In Section~\ref{sec:nonlinear}, we will show how our MICP formulation can be extended to unbounded sets and nonlinear objective functions.

\subsection{Homogenization}
\label{sec:homog}

The following operation is at the core of our method.
It allows us to switch on and off convex constraints using binary variables.

\begin{definition}[Homogenization]
\label{def:homog}
The \emph{homogenization} of a compact convex set $\cX \in \reals^n$ is the set
\begin{align*}
\tilde \cX = \{(\bx, y) \in \reals^{n+1}: y \geq 0, \ \bx \in y \cX \}.
\end{align*}
\end{definition}

\begin{example}
The homogenization of the closed interval $\cX = [1,2] \subset \reals$ is the set $\tilde \cX = \{(x, y) : y \leq x \leq 2y \}$ shown in Figure~\ref{fig:homog_bounded}.
\end{example}

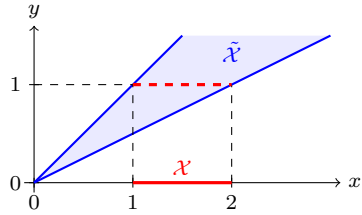
\begin{figure}
\centering
\begin{tikzpicture}[scale=1.3]
\draw[->] (-0.1,0) -- (3.1,0) node[right] {$x$};
\draw[->] (0,-0.1) -- (0,1.6) node[above] {$y$};
\foreach \x in {0, 1, 2} {\draw (\x,0.05) -- (\x,-0.05) node[below] {\small $\x$};}
\foreach \y in {0, 1} {\draw (0.05,\y) -- (-0.05,\y) node[left] {\small $\y$};}
\fill[blue!20,opacity=0.4] (0,0) -- (3,1.5) -- (1.5,1.5) -- cycle;
\draw[thick,blue] (0,0) -- (1.5,1.5);
\draw[thick,blue] (0,0) -- (3,1.5);
\draw[dashed]  (0,1) -- (1,1);
\draw[dashed]  (1,0) -- (1,1);
\draw[dashed]  (2,0) -- (2,1);
\draw[very thick,red] (1,0) -- (2,0);
\draw[very thick,red,dashed] (1,1) -- (2,1);
\draw (1.5,0) node[above] {\color{red} $\cX$};
\draw (2,1.333333333) node {\color{blue} $\tilde \cX$};
\end{tikzpicture}
\caption{Homogenization $\tilde \cX$ of the closed interval $\cX = [1, 2]$.}
\label{fig:homog_bounded}
\end{figure}

The homogenization $\tilde \cX$ is easily verified to be a closed convex cone.
In fact, it could be equivalently defined as the cone generated by the points $(\bx, 1)$ with $\bx \in \cX$.
Especially important for us are the observations that $(\bx, 1) \in \tilde \cX$ is equivalent to $\bx \in \cX$ and $(\bx, 0) \in \tilde \cX$ is equivalent to $\bx = \bzero$ (since $0 \cX = \{\bzero\}$).

In Section~\ref{sec:homog_unbd}, we will extend Definition~\ref{def:homog} to unbounded sets, and, in Section~\ref{sec:gcsopt_micp}, we will show how homogenization transformations are well suited for numerical computations and amenable to standard conic-optimization solvers.

\begin{remark}
The term homogenization is not fully standard.
It is used, for example, in~\cite[Definition~1.13]{ziegler2012lectures}.
The same operation is called \emph{conic hull} in~\cite[Section~3.3]{ben2001lectures}, while it is commonly named \emph{perspective} when applied to functions (see, e.g.,~\cite[Section~IV.2.2]{hiriart2013convex} or~\cite[Section~2.3.3]{boyd2004convex}).
The same construction appears frequently in~\cite[Section~8]{rockafellar1970convex}, but is used without an explicit name.
\end{remark}

\subsection{Mixed-integer nonconvex formulation}
\label{sec:mincp}

A natural extension of the ILP~\eqref{eq:graph_ilp} to model the GCS problem~\eqref{eq:gcs_problem} leads to the following program:
\begin{subequations}
\label{eq:mincp_initial}
\begin{align}
\label{eq:mincp_initial_obj}
\minimize \quad & \sum_{v \in \cV} y_v \bc_v^\top \bx_v + \sum_{e = [v,w] \in \cE} y_e \bc_e^\top (\bx_v, \bx_w)  \\
\subjectto \quad
& \by \in \cY\bin, \\
\label{eq:mincp_initial_v}
& y_v (\bx_v, 1) \in \tilde \cX_v, && v \in \cV, \\
\label{eq:mincp_initial_e} 
& y_e (\bx_v, \bx_w, 1) \in \tilde \cX_e, && e = [v,w] \in \cE.
\end{align}
\end{subequations}
Here the decision variables are both the binary variables $\by$ and the continuous variables $\bx_v$ for $v \in \cV$.
The objective uses the binary variables to activate and deactivate the cost contributions of the GCS vertices and edges, which are linear functions by Assumption~\ref{ass:linear}.
The first constraint comes from the ILP, and forces $\by$ to be the incidence vector of an admissible subgraph.
The second constraint switches the vertex constraints on and off: when $y_v = 1$ it gives us $(\bx_v, 1) \in \tilde \cX_v$, which is equivalent to $\bx_v \in \cX_v$, and when $y_v = 0$ it gives us $(\bzero, 0) \in \tilde \cX_v$, which is always satisfied since $\tilde \cX_v$ is a closed cone.
The third constraint plays the same role for the edge constraints.

The equivalence of problem~\eqref{eq:mincp_initial} and the GCS problem~\eqref{eq:gcs_problem} is easily established, provided that the polytope $\cY$ satisfies the incidence condition~\eqref{eq:incidence} or, when the objective functions $f_v$ for $v \in \cV$ and $f_e$ for $e \in \cE$ are nonnegative, the relaxed incidence condition~\eqref{eq:incidence_dominant}.

We rewrite problem~\eqref{eq:mincp_initial} in an equivalent form to simplify the upcoming derivations.
First, observe that the objective~\eqref{eq:mincp_initial_obj} as well as the constraints~\eqref{eq:mincp_initial_v} and~\eqref{eq:mincp_initial_e} are nonconvex due to products between binary and continuous variables.
However, they all become convex if expressed in terms of the following auxiliary variables:
\begin{subequations}
\label{eq:prod_vars}
\begin{align}
\label{eq:prod_vars_v}
& \bz_v = y_v \bx_v, && v \in \cV, \\
\label{eq:prod_vars_ve}
& \bz_v^e = y_e \bx_v, \ \bz_w^e = y_e \bx_w, && e = [v, w] \in \cE.
\end{align}
\end{subequations}
Secondly, we express the auxiliary variables $\bz_v^e$ just defined in terms of $\bz_v$ rather than $\bx_v$:
\begin{align}
\label{eq:prod_vars_ve_reindex}
& \bz_v^e = y_e \bx_v = y_e y_v \bx_v = y_e \bz_v, && v \in \cV, \ e \in \incident{v}.
\end{align}
The first equality is equivalent to~\eqref{eq:prod_vars_ve} after a reindexing.
The second and the third equalities follow from~\eqref{eq:binary_product} and~\eqref{eq:prod_vars_v}, respectively.

Collecting all the modifications, problem~\eqref{eq:mincp_initial} is reformulated as
\begin{subequations}
\label{eq:mincp}
\begin{align}
\minimize \quad &
\sum_{v \in \cV} \bc_v^\top \bz_v + \sum_{e = [v,w] \in \cE}
\bc_e^\top (\bz_v^e, \bz_w^e) \\
\subjectto \quad
\label{eq:mincp_int}
& \by \in \cY\bin, \\
\label{eq:mincp_v}
& (\bz_v, y_v) \in \tilde \cX_v, && v \in \cV, \\
\label{eq:mincp_e}
& (\bz_v^e, \bz_w^e, y_e) \in \tilde \cX_e, && e = [v,w] \in \cE, \\
\label{eq:gcs_bilin}
& \bz_v^e = y_e \bz_v, && v \in \cV,\ e \in \incident{v}.
\end{align}
\end{subequations}
The objective, the second constraint, and the third constraint are the result of substituting the new variables~\eqref{eq:prod_vars} in the original problem.
The last constraint is taken from~\eqref{eq:prod_vars_ve_reindex}, and is equivalent to~\eqref{eq:prod_vars_ve}.
We omitted the constraint $\bz_v = y_v \bx_v$ from~\eqref{eq:prod_vars_v} since the variables $\bx_v$ would appear only in this constraint, and the fact that $\bz_v = \bzero$ when $y_v=0$ is already implied by~\eqref{eq:mincp_v}.
Therefore, we can solve the problem without constraint~\eqref{eq:prod_vars_v} and, afterwards, define $\bx_v = \bz_v$ if $y_v =1$ or leave $\bx_v$ undefined if $y_v=0$, for all $v \in \cV$.

Constraint~\eqref{eq:gcs_bilin} is nonconvex and makes problem~\eqref{eq:mincp} an MINCP.
This MINCP is akin to existing ones for graph problems with neighborhoods~\cite{gentilini2013travelling,blanco2017minimum,fourney2024mobile}, but applies to any GCS problem, uses homogenization transformations, and includes edge constraints.
Although MINCP solvers have improved substantially in recent years, their scalability remains limited: our next step is to transform problem~\eqref{eq:mincp} into a more tractable MICP by leveraging the bilinear structure of constraint~\eqref{eq:gcs_bilin}.

\subsection{Convex relaxation of the bilinear constraint}

We obtain our MICP by replacing the bilinear constraint~\eqref{eq:gcs_bilin} with a family of convex constraints.
Ideally, these convex constraints should:
\begin{itemize}
\item
Be equivalent to the bilinear constraint when the vector $\by$ has binary entries, resulting in a \emph{correct} MICP formulation of the GCS problem~\eqref{eq:gcs_problem}.
\item
Envelop the bilinear constraint tightly when the vector $\by$ has fractional entries, resulting in a \emph{strong} MICP formulation.
\item
Be few in number, resulting in a \emph{compact} MICP formulation.
\end{itemize}
Below, we present the approach that, in our experience, best balances these competing objectives.
Section~\ref{sec:comparison} reports a numerical comparison with a simpler formulation based on McCormick envelopes~\cite{mccormick1976computability}.

The next lemma parallels~\cite[Lemma~5.4]{marcucci2024shortest}, and gives us an algorithmic way of enveloping the bilinear equality~\eqref{eq:gcs_bilin} with convex constraints.
Recall that a constraint is \emph{valid} for a set if it is satisfied by all points in the set, and \emph{valid} for an optimization problem if it is valid for the problem's feasible set.

\begin{lemma}[Valid-constraint generation]
\label{lem:valid_ineq}
For some vertex $v \in \cV$, assume that the following linear inequality is valid for the set $\cY\bin$:
\begin{align}
\label{eq:valid_ineq}
a y_v + \sum_{e \in \incident{v}} b_e y_e \geq 0.
\end{align}
Then the following convex constraint is valid for the MINCP~\eqref{eq:mincp}:
\begin{align}
\label{eq:valid_ineq_lift}
a (\bz_v, y_v) + \sum_{e \in \incident{v}} b_e (\bz_v^e, y_e) \in \tilde \cX_v.
\end{align}
\end{lemma}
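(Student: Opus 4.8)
The plan is to show that the linear validity of \eqref{eq:valid_ineq} over $\cY\bin$, combined with the bilinear identities \eqref{eq:binary_product} and \eqref{eq:gcs_bilin}, forces the lifted point in \eqref{eq:valid_ineq_lift} to lie in the cone $\tilde\cX_v$. I would argue pointwise: fix any feasible point of the MINCP \eqref{eq:mincp}, i.e.\ a binary vector $\by \in \cY\bin$ together with vectors $\bz_v$, $\bz_v^e$ satisfying \eqref{eq:mincp_v}, \eqref{eq:mincp_e}, \eqref{eq:gcs_bilin}, and show the combination \eqref{eq:valid_ineq_lift} holds at that point. Since the lifted constraint is convex in $(\bz_v, \by, \{\bz_v^e\})$, validity at every feasible point is exactly what "valid for the MINCP" means.

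The key step is a case analysis on the binary value of $y_v$. \emph{Case $y_v = 1$:} by \eqref{eq:binary_product} every $y_e$ with $e \in \incident{v}$ equals $y_e^2$, hence is $0$ or $1$, and by \eqref{eq:gcs_bilin} we have $\bz_v^e = y_e \bz_v$. Therefore $(\bz_v^e, y_e) = y_e(\bz_v, y_v) = y_e(\bz_v, 1)$, which is $(\bz_v,1)$ when $y_e=1$ and the origin when $y_e=0$. So the left-hand side of \eqref{eq:valid_ineq_lift} equals $\big(a + \sum_{e:\,y_e=1} b_e\big)(\bz_v, 1)$. From \eqref{eq:mincp_v}, $(\bz_v, 1) \in \tilde\cX_v$, and the scalar coefficient is nonnegative precisely because \eqref{eq:valid_ineq} is valid for this binary point (where $y_v = 1$). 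A nonnegative multiple of a point in the cone $\tilde\cX_v$ stays in $\tilde\cX_v$, so we are done. \emph{Case $y_v = 0$:} then \eqref{eq:binary_product} forces $y_e = 0$ for all $e \in \incident{v}$, \eqref{eq:mincp_v} forces $\bz_v = \bzero$, and \eqref{eq:gcs_bilin} then gives $\bz_v^e = y_e \bz_v = \bzero$ as well. Hence the entire left-hand side of \eqref{eq:valid_ineq_lift} is the origin, which lies in the closed cone $\tilde\cX_v$. This covers all feasible points.

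The main obstacle — really the only subtle point — is recognizing that in the $y_v = 1$ case the scalar coefficient $a + \sum_{e:\,y_e=1} b_e$ is nonnegative, which requires reading \eqref{eq:valid_ineq} as a statement about \emph{this particular} binary point rather than about $\cY$ in general; one must be careful that the edge variables $y_e$ appearing there are exactly the binary values dictated by \eqref{eq:binary_product}, so that the inequality's left-hand side coincides with our coefficient. A secondary point worth spelling out is why \eqref{eq:valid_ineq_lift} is convex in the decision variables: each term $(\bz_v^e, y_e)$ is linear in the variables, the set $\tilde\cX_v$ is convex, and the coefficients $a, b_e$ are fixed scalars, so the constraint "an affine image of the variables lies in a convex set" is convex — this legitimizes the reduction from "valid for the feasible set" to "valid at each feasible point." Everything else is the routine bookkeeping of substituting \eqref{eq:binary_product} and \eqref{eq:gcs_bilin}.
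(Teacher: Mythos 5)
Your proof is correct and is essentially the paper's argument in unrolled form: the paper multiplies the cone membership $(\bz_v,y_v)\in\tilde\cX_v$ by the nonnegative scalar $a y_v+\sum_{e\in\incident{v}}b_e y_e$ and then linearizes the products via $y_v^2=y_v$, $y_v\bz_v=\bz_v$, \eqref{eq:binary_product}, and \eqref{eq:gcs_bilin}, which is exactly what your case analysis on $y_v\in\{0,1\}$ verifies pointwise. Both hinge on the same three facts (nonnegativity of the scalar at binary points, $\tilde\cX_v$ being a closed cone, and the bilinear identities), so no substantive difference.
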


\begin{proof}
We multiply the constraint $(\bz_v, y_v) \in \tilde \cX_v$ from~\eqref{eq:mincp_v} by the left-hand side of~\eqref{eq:valid_ineq}, yielding the valid constraint
$a (y_v \bz_v, y_v^2) + \sum_{e \in \incident{v}} b_e (y_e \bz_v, y_e y_v) \in \tilde \cX_v$.
This constraint is convexified through the identities $y_v \bz_v = \bz_v$ and $y_v^2 = y_v$ for all $v \in \cV$, together with the bilinear equalities~\eqref{eq:gcs_bilin} and~\eqref{eq:binary_product}.
\hfill\qed
\end{proof}
Lemma~\ref{lem:valid_ineq} allows us to translate any linear inequality from the original ILP into a valid convex constraint for the MINCP, provided that the inequality involves only the binary variables associated with a vertex and its incident edges.

We illustrate the usage of Lemma~\ref{lem:valid_ineq} through a simple example.
Since the polytope $\cY$ is contained in the subgraph polytope in~\eqref{eq:subgraph_polytope}, for any vertex $v \in \cV$ and edge $e \in \incident{v}$, the inequalities $0 \leq y_e \leq y_v \leq 1$ are valid for $\cY\bin$.
The first two of these inequalities are linear and, using our lemma, we have
\begin{align*}
y_e \geq 0 & \quad \Longrightarrow \quad (\bz_v^e, y_e) \in \tilde \cX_v, && v \in \cV,\ e \in \incident{v}, \\
y_v- y_e \geq 0 & \quad \Longrightarrow \quad (\bz_v - \bz_v^e, y_v - y_e) \in \tilde \cX_v, && v \in \cV,\ e \in \incident{v}.
\end{align*}
The following theorem shows that these two convex constraints alone are sufficient to meet the first criterion above, i.e., they are equivalent to the bilinear constraint~\eqref{eq:gcs_bilin} when $\by$ is binary, yielding a correct MICP formulation.

\begin{theorem}[MICP correctness]
\label{th:micp}
The following MICP has the same optimal value as the GCS problem~\eqref{eq:gcs_problem}:
\begin{subequations}
\label{eq:micp}
\begin{align}
\label{eq:micp_obj}
\minimize \quad &
\sum_{v \in \cV} \bc_v^\top \bz_v + \sum_{e = [v,w] \in \cE} \bc_e^\top (\bz_v^e, \bz_w^e) \\
\subjectto \quad
\label{eq:micp_Y}
& \by \in \cY\bin, \\
\label{eq:micp_e0}
& (\bz_v^e, y_e) \in \tilde \cX_v, && v \in \cV,\ e \in \incident{v}, \\
\label{eq:micp_e1}
& (\bz_v - \bz_v^e, y_v - y_e) \in \tilde \cX_v, && v \in \cV,\ e \in \incident{v}, \\
\label{eq:micp_e_hom}
& (\bz_v^e, \bz_w^e, y_e) \in \tilde \cX_e, && e = [v,w] \in \cE.
\end{align}
\end{subequations}
A subgraph $H$ is optimal for problem~\eqref{eq:gcs_problem} if and only if its incidence vector $\by$ is optimal for this MICP.
Optimal values of the continuous variables $\bx_v$ associated with the subgraph $H$ are given by the optimal values of the MICP variables $\bz_v$ for all $v \in \cV$ such that $y_v = 1$.
\end{theorem}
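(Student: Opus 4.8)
The plan is to deduce the theorem from the equivalence between the MINCP~\eqref{eq:mincp} and the GCS problem~\eqref{eq:gcs_problem}, which has already been established, by proving that the MICP~\eqref{eq:micp} is in turn equivalent to~\eqref{eq:mincp}. First I would record that, since $\cY \subseteq \cY\sub$, for every vertex $v \in \cV$ and edge $e \in \incident{v}$ the linear inequalities $y_e \geq 0$ and $y_v - y_e \geq 0$ are valid for $\cY\bin$. Applying Lemma~\ref{lem:valid_ineq} to these two inequalities yields exactly constraints~\eqref{eq:micp_e0} and~\eqref{eq:micp_e1}, so these are valid for~\eqref{eq:mincp}. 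Since~\eqref{eq:micp} is obtained from~\eqref{eq:mincp} by keeping~\eqref{eq:mincp_int} and~\eqref{eq:mincp_e}, adding the valid constraints~\eqref{eq:micp_e0}--\eqref{eq:micp_e1}, and dropping~\eqref{eq:mincp_v} and the bilinear equality~\eqref{eq:gcs_bilin}, every feasible point of~\eqref{eq:mincp} is feasible for~\eqref{eq:micp} with the same objective value; hence the optimal value of~\eqref{eq:micp} is at most that of~\eqref{eq:mincp}.

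For the reverse inequality I would take an arbitrary feasible point $(\by, \{\bz_v\}, \{\bz_v^e\})$ of~\eqref{eq:micp}; here $\by$ is binary because of~\eqref{eq:micp_Y}. I then check that the two dropped constraints are automatically satisfied. Constraint~\eqref{eq:mincp_v} follows from summing~\eqref{eq:micp_e0} and~\eqref{eq:micp_e1} and using that $\tilde\cX_v$ is a convex cone, because $(\bz_v^e, y_e) + (\bz_v - \bz_v^e, y_v - y_e) = (\bz_v, y_v)$. The bilinear equality~\eqref{eq:gcs_bilin} is recovered by a two-case argument exploiting the vanishing property $(\bx, 0) \in \tilde\cX_v \iff \bx = \bzero$ of the homogenization: if $y_e = 0$ then~\eqref{eq:micp_e0} reads $(\bz_v^e, 0) \in \tilde\cX_v$, forcing $\bz_v^e = \bzero = y_e \bz_v$; if $y_e = 1$ then $y_v = 1$ since $\by \in \cY\bin \subseteq \cY\sub$, so~\eqref{eq:micp_e1} reads $(\bz_v - \bz_v^e, 0) \in \tilde\cX_v$, forcing $\bz_v^e = \bz_v = y_e \bz_v$. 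Thus the point is feasible for~\eqref{eq:mincp} with the same objective, and the two optimal values coincide.

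Both directions above are objective-preserving and leave $\by$ unchanged, so they restrict to a bijection between the optimal solution sets of~\eqref{eq:micp} and~\eqref{eq:mincp}. Composing this with the known correspondence between optimal solutions of~\eqref{eq:mincp} and optimal subgraphs of~\eqref{eq:gcs_problem}, I would conclude that an incidence vector $\by$ is optimal for~\eqref{eq:micp} if and only if it encodes an optimal subgraph $H$. For the recovery of the continuous variables I would follow the rule from Section~\ref{sec:mincp}: for $v$ with $y_v = 1$ set $\bx_v = \bz_v$. Then~\eqref{eq:micp_e0}--\eqref{eq:micp_e1} give $(\bx_v, 1) \in \tilde\cX_v$, i.e.\ $\bx_v \in \cX_v$; for a selected edge $e = [v,w] \in \cF$ we have $y_e = 1$, hence $y_v = y_w = 1$, and the bilinear relation just established gives $\bz_v^e = \bx_v$ and $\bz_w^e = \bx_w$, turning~\eqref{eq:micp_e_hom} into $(\bx_v, \bx_w) \in \cX_e$; finally the objectives agree term by term because $\bz_v = \bzero$ whenever $y_v = 0$ and $\bz_v^e = \bz_w^e = \bzero$ whenever $y_e = 0$.

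The mathematical content is light, so I expect the main obstacle to be careful bookkeeping. The one genuinely delicate step is the two-case recovery of the bilinear equality~\eqref{eq:gcs_bilin} for binary $\by$, which must combine the cone-at-zero property of homogenization with the subgraph-polytope implication $y_e = 1 \Rightarrow y_v = 1$; a secondary point is to make sure the whole argument remains valid under the relaxed incidence condition~\eqref{eq:incidence_dominant} (nonnegative weights), since that is precisely the regime in which~\eqref{eq:mincp} is equivalent to~\eqref{eq:gcs_problem}.
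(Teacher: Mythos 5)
Your proposal is correct and follows essentially the same route as the paper: observe that the MICP is a relaxation of the MINCP~\eqref{eq:mincp} (since~\eqref{eq:micp_e0} and~\eqref{eq:micp_e1} are generated by Lemma~\ref{lem:valid_ineq} from the subgraph inequalities), and then recover the bilinear equality~\eqref{eq:gcs_bilin} for binary $\by$ via the same two-case argument using the vanishing property of the homogenization and the implication $y_e = 1 \Rightarrow y_v = 1$. The additional bookkeeping you supply (recovering~\eqref{eq:mincp_v} by summation, reconstructing $\bx_v = \bz_v$, and checking the objective term by term) matches the remarks the paper places around, rather than inside, its proof.
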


The MICP~\eqref{eq:micp} involves the same variables as the original MINCP, and has a similar number of constraints.
Its convex relaxation is obtained by replacing $\cY^\text{bin}$ with $\cY$ in~\eqref{eq:micp_Y}.
Observe also that in the MICP we have omitted the constraint $(\bz_v, y_v) \in \tilde \cX_v$ from~\eqref{eq:mincp_v}: this is redundant since it is recovered by summing~\eqref{eq:micp_e0} and~\eqref{eq:micp_e1}, and using the fact that $\tilde \cX_v$ is a cone.

\begin{proof}[Theorem~\ref{th:micp}]
Since the MICP is a relaxation of the MINCP~\eqref{eq:mincp}, it suffices to show that the valid convex constraints imply the bilinear equality $\bz_v^e = y_e \bz_v$ that they replaced.
When $y_e=0$, constraint~\eqref{eq:micp_e0} gives us $(\bz_v^e, 0) \in \tilde \cX_v$, i.e., $\bz_v^e = \bzero$.
When $y_e=1$, we have $y_v=1$ if edge $e$ is incident with vertex $v$, and constraint~\eqref{eq:micp_e1} becomes $(\bz_v - \bz_v^e, 0) \in \tilde \cX_v$, i.e., $\bz_v = \bz_v^e$.
\hfill\qed
\end{proof}

Regarding the other two criteria above, the MICP formulation~\eqref{eq:micp} is compact but may still be weak.
To strengthen it, we must apply Lemma~\ref{lem:valid_ineq} to all suitable inequalities that are valid for $\cY\bin$ and specific to the underlying graph optimization problem.
This process is discussed further in the next section, and several examples are illustrated in Section~\ref{sec:gcs_problems}.

\section{Automatic tailoring of the mixed-integer convex program}
\label{sec:lemma_analysis}

We now describe how the base MICP~\eqref{eq:micp} can be automatically tailored to specific classes of GCS problems.
First, we discuss a few variations and extensions of our constraint-generation procedure.
Second, we present the algorithm that performs the automatic tailoring.
Finally, we provide a qualitative comparison of our method with alternative relaxation techniques.

\subsection{Variations and extensions of the constraint-generation procedure}
\label{sec:variants}

Lemma~\ref{lem:valid_ineq} is easily specialized to equality constraints.

\begin{corollary}[Constraint generation for equalities]
\label{cor:valid_eq}
For some vertex $v \in \cV$, assume that the linear equality $a y_v + \sum_{e \in \incident{v}} b_e y_e= 0$ is valid for the set $\cY\bin$.
Then the linear equality $a \bz_v + \sum_{e \in \incident{v}} b_e \bz_v^e = \bzero$ is valid for the MINCP~\eqref{eq:mincp}.
\end{corollary}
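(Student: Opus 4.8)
The plan is to derive the equality version directly from Lemma~\ref{lem:valid_ineq} by splitting the linear equality into two opposing inequalities. Specifically, if $a y_v + \sum_{e \in \incident{v}} b_e y_e = 0$ is valid for $\cY\bin$, then both $a y_v + \sum_{e \in \incident{v}} b_e y_e \geq 0$ and $(-a) y_v + \sum_{e \in \incident{v}} (-b_e) y_e \geq 0$ are valid for $\cY\bin$. Applying Lemma~\ref{lem:valid_ineq} to each of these two inequalities yields two valid convex constraints for the MINCP~\eqref{eq:mincp}:
\begin{align*}
a (\bz_v, y_v) + \sum_{e \in \incident{v}} b_e (\bz_v^e, y_e) & \in \tilde \cX_v, \\
-a (\bz_v, y_v) - \sum_{e \in \incident{v}} b_e (\bz_v^e, y_e) & \in \tilde \cX_v.
\end{align*}
Call the first vector $(\bp, q) := a (\bz_v, y_v) + \sum_{e \in \incident{v}} b_e (\bz_v^e, y_e)$; then we have both $(\bp, q) \in \tilde \cX_v$ and $(-\bp, -q) \in \tilde \cX_v$.

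The second step is to observe that a closed convex cone $\cK$ satisfying $(\bp,q) \in \cK$ and $-(\bp,q) \in \cK$ forces $(\bp, q)$ to lie in the lineality space of $\cK$; for the homogenization $\tilde \cX_v$ of a \emph{bounded} set, this lineality space is $\{\bzero\}$. Indeed, by Definition~\ref{def:homog}, $(\bp, q) \in \tilde \cX_v$ requires $q \geq 0$, and $(-\bp, -q) \in \tilde \cX_v$ requires $-q \geq 0$, so $q = 0$; then $(\bp, 0) \in \tilde \cX_v$ means $\bp \in 0 \cdot \cX_v = \{\bzero\}$, hence $\bp = \bzero$. Therefore $a \bz_v + \sum_{e \in \incident{v}} b_e \bz_v^e = \bzero$ is valid for the MINCP~\eqref{eq:mincp}, which is exactly the claimed equality.

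I expect the argument to be almost entirely routine given Lemma~\ref{lem:valid_ineq} and Assumption~\ref{ass:bounded}; the only point requiring care is the invocation of boundedness to conclude that the lineality space of $\tilde \cX_v$ is trivial. (This is why the statement implicitly relies on $\cX_v$ being compact, as in Definition~\ref{def:homog}.) An alternative, perhaps cleaner, route avoids this cone-theoretic remark entirely: apply Lemma~\ref{lem:valid_ineq} to the two inequalities and then simply add the resulting membership statements as two separate constraints to the program — but since we want a single \emph{linear equality} as the conclusion, extracting $\bp = \bzero$ via the lineality-space observation is the natural final move. The main obstacle, such as it is, is therefore just making the boundedness dependence explicit rather than any real mathematical difficulty.
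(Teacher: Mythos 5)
Your proof is correct in the setting where the corollary is stated (Assumption~\ref{ass:bounded} in force), but it takes a genuinely different route from the paper. The paper's proof does not pass through Lemma~\ref{lem:valid_ineq} or the homogenized cones at all: it multiplies the scalar equality $a y_v + \sum_{e \in \incident{v}} b_e y_e = 0$ by the vector $\bz_v$ and linearizes the products using the identities $y_v \bz_v = \bz_v$ and $y_e \bz_v = \bz_v^e$ (the latter being exactly the bilinear constraint~\eqref{eq:gcs_bilin}), which immediately yields $a \bz_v + \sum_{e \in \incident{v}} b_e \bz_v^e = \bzero$. You instead split the equality into two opposing inequalities, apply Lemma~\ref{lem:valid_ineq} twice, and conclude via the observation that the homogenization of a compact set has trivial lineality space. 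Both arguments are sound, and yours has the virtue of reusing the already-proven lemma as a black box; but it buys this at the cost of a genuine dependence on compactness of $\cX_v$ that the paper's direct multiply-and-linearize argument does not have. This matters later: in Section~\ref{sec:nonlinear} the sets are replaced by the unbounded epigraphs $\cX_v'$, whose homogenizations can contain the ray $(\bzero, t_v, 0)$ with $t_v \geq 0$, and more generally an unbounded $\cX_v$ with a nontrivial lineality direction would break your final step $(\bp,0) \in \tilde\cX_v \Rightarrow \bp = \bzero$, whereas the paper's proof carries over verbatim. So your closing remark is apt --- the boundedness dependence is the real content of your version --- but it is a dependence the intended proof avoids entirely.
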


\begin{proof}
We multiply the scalar linear equality by $\bz_v$, then linearize the variable products as in the proof of Lemma~\ref{lem:valid_ineq}.
\hfill\qed
\end{proof}

Informally, the next proposition states that our procedure preserves constraint redundancy: given a set of linear inequalities amenable to Lemma~\ref{lem:valid_ineq}, it is sufficient to consider only those that are not implied by the others.

\begin{proposition}[Redundancy preservation]
\label{prop:redundancy}
Assume that the linear inequality~\eqref{eq:valid_ineq} is implied by $a_i y_v + \sum_{e \in \incident{v}} b_{i,e} y_e \geq 0$ for $i \in \cI$.
Then the convex constraint~\eqref{eq:valid_ineq_lift} is implied by $a_i (\bz_v, y_v) + \sum_{e \in \incident{v}} b_{i,e} (\bz_v^e, y_e) \in \tilde \cX_v$ for $i \in \cI$.
\end{proposition}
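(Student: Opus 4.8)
The plan is to convert the redundancy hypothesis, which is a logical implication among homogeneous linear inequalities, into an explicit nonnegative combination by Farkas' lemma, and then to push that combination through the homogenization, using that $\tilde \cX_v$ is a convex cone. Write $\ell(\by) = a y_v + \sum_{e \in \incident{v}} b_e y_e$ and $\ell_i(\by) = a_i y_v + \sum_{e \in \incident{v}} b_{i,e} y_e$, viewed as linear functionals in the coordinates $(y_v, (y_e)_{e \in \incident{v}})$. Since $\ell \geq 0$ is implied by $\{\ell_i \geq 0 : i \in \cI\}$, Farkas' lemma provides multipliers $\lambda_i \geq 0$, $i \in \cI$, with $\ell = \sum_{i \in \cI} \lambda_i \ell_i$; matching the coefficients of $y_v$ and of each $y_e$ gives $a = \sum_{i \in \cI} \lambda_i a_i$ and $b_e = \sum_{i \in \cI} \lambda_i b_{i,e}$ for all $e \in \incident{v}$.

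Next I would substitute these identities into the left-hand side of~\eqref{eq:valid_ineq_lift}. Writing $\bp_i = a_i (\bz_v, y_v) + \sum_{e \in \incident{v}} b_{i,e} (\bz_v^e, y_e)$, a direct rearrangement (distribute the $\lambda_i$ and collect the terms multiplying $(\bz_v, y_v)$ and each $(\bz_v^e, y_e)$) gives
\[
a (\bz_v, y_v) + \sum_{e \in \incident{v}} b_e (\bz_v^e, y_e) = \sum_{i \in \cI} \lambda_i \bp_i .
\]
By assumption each $\bp_i$ lies in $\tilde \cX_v$, and $\tilde \cX_v$ is a closed convex cone (being the homogenization of a compact convex set, as noted after Definition~\ref{def:homog}); hence the conic combination $\sum_{i \in \cI} \lambda_i \bp_i$ with $\lambda_i \geq 0$ also lies in $\tilde \cX_v$. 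This is precisely constraint~\eqref{eq:valid_ineq_lift}, which is therefore implied by the constraints $\bp_i \in \tilde \cX_v$, $i \in \cI$, as claimed.

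The only delicate step is the appeal to Farkas' lemma, together with pinning down the intended meaning of ``implied by.'' I read it as implication over all real vectors, for which the homogeneous form of Farkas' lemma (with $\cI$ finite) yields the nonnegative combination directly, with no surviving constant-term multiplier since the system is homogeneous. If one instead wants the implication to hold only over the subgraph polytope $\cY\sub$ (or over $\cY$), the Farkas certificate may additionally use the defining inequalities $y_v \geq y_e$ and $y_e \geq 0$; but these are themselves of the form~\eqref{eq:valid_ineq}, and their lifts are exactly~\eqref{eq:micp_e1} and~\eqref{eq:micp_e0}, which are already present in the MICP~\eqref{eq:micp}, so the same conic-combination argument closes with an enlarged index set. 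Everything else is a mechanical substitution.
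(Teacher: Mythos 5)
Your proof is correct and follows essentially the same route as the paper's: the paper likewise takes the hypothesis to mean the existence of nonnegative multipliers $\lambda_i$ with $(a, b_e) = \sum_{i \in \cI} \lambda_i (a_i, b_{i,e})$ and then combines the convex constraints with those same coefficients, using that $\tilde \cX_v$ is a convex cone. Your additional remarks (invoking Farkas' lemma explicitly and noting that an implication over $\cY\sub$ only enlarges the certificate by the subgraph inequalities, whose lifts are already in the MICP) are a careful elaboration of what the paper leaves implicit, not a different argument.
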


\begin{proof}
The assumption is equivalent to the existence of nonnegative coefficients $\lambda_i$ for $i \in \cI$ such that $(a, b_e) = \sum_{i \in \cI} \lambda_i (a_i, b_{i,e})$.
By combining the convex constraints with the same coefficients we get~\eqref{eq:valid_ineq_lift}.
\hfill\qed
\end{proof}
As an illustration of this proposition, recall that in the MICP~\eqref{eq:micp} we have omitted the constraint $(\bz_v, y_v) \in \tilde \cX_v$ since it is implied by $(\bz_v^e, y_e) \in \tilde \cX_v$ and $(\bz_v - \bz_v^e, y_v - y_e) \in \tilde \cX_v$.
Using Proposition~\ref{prop:redundancy}, we arrive to the same conclusion just by noticing that $y_e \geq 0$ and $y_v - y_e \geq 0$ imply $y_v \geq 0$.

The next lemma extends the applicability of our procedure to affine inequalities.
It shows that any affine inequality (involving the binary variables related to a common vertex $v \in \cV$) can be replaced by a linear inequality amenable to Lemma~\ref{lem:valid_ineq}, together with either the condition $y_v \leq 1$ or $y_v = 1$.

\begin{lemma}[From affine to linear inequalities]
\label{lem:linear_vs_affine}
For some vertex $v \in \cV$, assume that the affine inequality $a y_v + \sum_{e \in \incident{v}} b_e y_e + c \geq 0$ is valid for $\cY\bin$.
Consider the linear inequality $(a + c) y_v + \sum_{e \in \incident{v}} b_e y_e \geq 0$.
If $c \geq 0$ (respectively, $c < 0$), then the linear inequality and $y_v \leq 1$ (respectively, $y_v = 1$) are valid for $\cY\bin$ and imply the affine inequality.
\end{lemma}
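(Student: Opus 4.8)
The plan is to split on the sign of $c$ and, in each of the two cases, verify the three separate claims buried in the statement: (i) the proposed linear inequality $(a+c)y_v + \sum_{e \in \incident{v}} b_e y_e \geq 0$ is valid for $\cY\bin$; (ii) the side condition ($y_v \leq 1$ when $c \geq 0$, or $y_v = 1$ when $c < 0$) is valid for $\cY\bin$; and (iii) the linear inequality together with the side condition implies the affine inequality. The single algebraic fact doing all the work is the rearrangement
\[
a y_v + \sum_{e \in \incident{v}} b_e y_e + c
= \Bigl[ (a+c)\, y_v + \sum_{e \in \incident{v}} b_e y_e \Bigr] + c\,(1 - y_v),
\]
which writes the affine expression as the proposed linear expression plus the slack term $c(1-y_v)$.

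First I would dispatch the straightforward parts. The bound $y_v \leq 1$ is immediate from $\cY\bin \subseteq \{0,1\}^{\cV \cup \cE}$. For the validity of the linear inequality on $\cY\bin$, I would evaluate it at an arbitrary $\by \in \cY\bin$ and case on $y_v$: if $y_v = 1$ the linear expression coincides with the affine expression (by the displayed identity, since $1 - y_v = 0$), which is nonnegative by hypothesis; if $y_v = 0$, then the containment $\cY \subseteq \cY\sub$ of~\eqref{eq:subgraph_polytope} forces $y_e = 0$ for every $e \in \incident{v}$, so the linear expression is exactly $0$. Then, for $c \geq 0$, implication (iii) is read straight off the identity: the bracketed term is $\geq 0$ by the linear inequality and $c(1-y_v) \geq 0$ since $c \geq 0$ and $y_v \leq 1$, so the sum, i.e.\ the affine expression, is $\geq 0$.

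For $c < 0$, the key step is to show that $y_v = 1$ is genuinely valid for $\cY\bin$ — this is the only point I expect to require a moment's thought, and it is where the subgraph-polytope hypothesis is indispensable. If some $\by \in \cY\bin$ had $y_v = 0$, then as above all incident $y_e$ vanish, and the affine inequality evaluated there collapses to $c \geq 0$, contradicting $c < 0$; hence every admissible incidence vector selects $v$. Given $y_v = 1$, implication (iii) is trivial: the slack term $c(1-y_v)$ vanishes in the identity, so the affine expression equals the linear expression, which is nonnegative. Assembling the two cases completes the argument; the main (and only real) obstacle is recognizing and justifying that $y_v = 1$ is forced on $\cY\bin$ when $c < 0$, a point a reader skimming the proof might otherwise take for granted.
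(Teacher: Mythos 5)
Your proof is correct and follows essentially the same route as the paper: the paper establishes validity of the linear inequality by multiplying the affine one by $y_v$ and linearizing via $y_v^2 = y_v$ and $y_v y_e = y_e$, which is just the algebraic form of your case split on $y_v \in \{0,1\}$ using the subgraph implication $y_v = 0 \Rightarrow y_e = 0$, and your contradiction argument for the validity of $y_v = 1$ when $c < 0$ is identical to the paper's. Your explicit identity writing the affine expression as the linear one plus $c(1-y_v)$ is a clean way to spell out the implication step that the paper dismisses as ``easily shown.''
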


\begin{proof}
For the validity of the linear inequality, note that this is obtained by multiplying the affine inequality by $y_v \geq 0$ and linearizing the variable products.
For the validity of $y_v=1$ when $c < 0$, note that setting $y_v=0$ would imply $y_e = 0$ for all $e \in \incident{v}$, and violate the affine inequality if $c < 0$.
The rest of the statement is easily shown.
\hfill\qed
\end{proof}
The last lemma is used as follows.
Given a candidate affine inequality, we replace it with the corresponding linear inequality and apply Lemma~\ref{lem:valid_ineq} to the latter.
If $c < 0$, we also add the constraint $y_v=1$ to our problem (if not already satisfied by the polytope $\cY$).

The next corollary adapts Lemma~\ref{lem:linear_vs_affine} to equality constraints.

\begin{corollary}[From affine to linear equalities]
\label{cor:linear_vs_affine_eq}
For some vertex $v \in \cV$, assume that the affine equality $a y_v + \sum_{e \in \incident{v}} b_e y_e + c = 0$ is valid for $\cY\bin$ and $c \neq 0$.
Then the linear equalities $(a + c) y_v + \sum_{e \in \incident{v}} b_e y_e = 0$ and $y_v=1$ are valid for $\cY\bin$ and imply the affine equality.
\end{corollary}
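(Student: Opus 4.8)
The plan is to prove this directly, mirroring the equality-case reasoning already used for Lemma~\ref{lem:linear_vs_affine}. The first step is to show that $y_v = 1$ is valid for $\cY\bin$. Indeed, for any $\by \in \cY\bin$, if $y_v = 0$ then $y_e = 0$ for every $e \in \incident{v}$ by the subgraph-polytope inclusion~\eqref{eq:subgraph_polytope} (a deselected vertex forces its incident edges to be deselected); substituting into the affine equality would then force $c = 0$, contradicting the assumption $c \neq 0$. Hence $y_v = 1$ on all of $\cY\bin$.

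The second step is purely algebraic. On $\cY\bin$ we may substitute $y_v = 1$ into the affine equality $a y_v + \sum_{e \in \incident{v}} b_e y_e + c = 0$, obtaining $(a + c) + \sum_{e \in \incident{v}} b_e y_e = 0$, which is exactly $(a + c) y_v + \sum_{e \in \incident{v}} b_e y_e = 0$; so the linear equality is valid for $\cY\bin$, and $y_v = 1$ is valid as shown. Conversely, any $\by$ satisfying the linear equality together with $y_v = 1$ satisfies $a y_v + \sum_{e \in \incident{v}} b_e y_e + c = (a + c) + \sum_{e \in \incident{v}} b_e y_e = (a + c) y_v + \sum_{e \in \incident{v}} b_e y_e = 0$, i.e.\ the affine equality, which proves the claim. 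Equivalently, one can split the affine equality into the two affine inequalities $\pm(a y_v + \sum_{e \in \incident{v}} b_e y_e + c) \geq 0$ — one of which has strictly negative constant term since $c \neq 0$ — and apply Lemma~\ref{lem:linear_vs_affine} to each, then combine the resulting linear inequalities and the condition $y_v = 1$.

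I do not expect any real obstacle: the only non-bookkeeping ingredient is the validity of $y_v = 1$, which is forced by the subgraph structure exactly as in the proof of Lemma~\ref{lem:linear_vs_affine}. The degenerate case $\incident{v} = \emptyset$ is covered by the same argument, since the sum over incident edges is then empty and $y_v = 0$ would again give $c = 0$.
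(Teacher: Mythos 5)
Your proof is correct and follows essentially the same route the paper intends: the corollary is stated without an explicit proof, as the straightforward adaptation of Lemma~\ref{lem:linear_vs_affine} to equalities, and your argument (forcing $y_v=1$ from $c\neq 0$ via the subgraph inequalities, then substituting to obtain the linear equality and its converse) is exactly that adaptation. No gaps; the empty-$\incident{v}$ and vacuous cases are handled correctly.
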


\subsection{Tailoring algorithm}
\label{sec:usage}

The base MICP~\eqref{eq:micp} is constructed leveraging only the linear inequalities that are valid for the subgraph polytope.
However, knowing the specific class of GCS problems and the corresponding polytope $\cY$, allows us to generate additional convex constraints and strengthen the MICP.

Ideally, to fully exploit the structure of a given polytope $\cY$ and maximize the MICP strength, we would proceed as follows.
For each vertex $v \in \cV$, let $\cY_v\bin$ denote the orthogonal projection of $\cY\bin$ onto the subspace of $y_v$ and $y_e$ for $e \in \delta_v$.
Find a minimal set of linear constraints that are valid for $\cY_v\bin$ and imply all other valid linear constraints (i.e., the extreme rays of the dual cone of $\cY_v\bin$).
Replace~\eqref{eq:micp_e0} and~\eqref{eq:micp_e1} with the convex constraints derived from these linear constraints using Lemma~\ref{lem:valid_ineq} (for inequalities) or Corollary~\ref{cor:valid_eq} (for equalities).
Since $\cY \subseteq \cY\sub$, the linear constraints imply the subgraph inequalities $y_v \ge y_e \ge 0$ for $e \in \incident{v}$, and the derived convex constraints imply~\eqref{eq:micp_e0} and~\eqref{eq:micp_e1} by Proposition~\ref{prop:redundancy}.
Hence, the resulting MICP is correct by Theorem~\ref{th:micp}.

In practice, however, computing the projection $\cY_v\bin$ is generally intractable.
Therefore, we construct our MICP in a more conservative way.
For all $v \in \cV$, we select the constraints that define $\cY$ and involve only the variables $y_v$ and $y_e$ for $e \in \delta_v$.
If a selected constraint is linear, we apply Lemma~\ref{lem:valid_ineq} or Corollary~\ref{cor:valid_eq}, and add the resulting convex constraint to the MICP~\eqref{eq:micp}.
If a constraint is affine, we linearize it using Lemma~\ref{lem:linear_vs_affine} (for inequalities) or Corollary~\ref{cor:linear_vs_affine_eq} (for equalities), add the constraint $y_v=1$ to the MICP when appropriate, and proceed as in the linear case.
Finally, if the linear and linearized constraints associated with vertex $v$ imply a subgraph inequality $y_e \geq 0$ or $y_v \ge y_e$ for some $e \in \incident{v}$, we remove the corresponding constraint from~\eqref{eq:micp_e0} or~\eqref{eq:micp_e1} since it is redundant.
The resulting MICP is again correct by Theorem~\ref{th:micp}.

Algorithm~\ref{alg:constraint_generation} summarizes the steps just described.
In the algorithm, $\cL_v$ represents the set of linear and linearized constraints associated with vertex $v$.

\begin{algorithm}[t]
\SetAlgoNoEnd
\DontPrintSemicolon
\caption{Tailoring of base MICP to a specific GCS problem}
\label{alg:constraint_generation}
\KwIns{GCS and polytope $\cY$}
\KwOut{tailored MICP}
initialize MICP as in~\eqref{eq:micp}\;
\ForEach{vertex $v \in \cV$}{
initialize empty set of linear constraints $\cL_v$\;
\ForEach{constraint defining $\cY$}{
\If{constraint involves only variables $y_v$ and $y_e$ for $e \in \delta_v$}{
\If{constraint is affine}{
linearize constraint using Lemma~\ref{lem:linear_vs_affine} or Corollary~\ref{cor:linear_vs_affine_eq}\;
add constraint $y_v=1$ to MICP if appropriate\;
}
add linear constraint to set $\cL_v$\;
add convex constraint generated by Lemma~\ref{lem:valid_ineq} or Corollary~\ref{cor:valid_eq} to MICP\;
}
}
\ForEach{edge $e \in \incident{v}$}{
\If{constraints in $\cL_v$ imply $y_e \ge 0$ or $y_v \ge y_e$}{
remove constraint generated by implied inequality from~\eqref{eq:micp_e0} or~\eqref{eq:micp_e1}\;
}
}
}
\end{algorithm}

\subsection{Alternative relaxation techniques}

The steps leading to our MICP generalize those presented in~\cite[Section~5]{marcucci2024shortest} for the SPP in GCS, with key improvements that exploit the structure of the subgraph polytope.
Specifically, our constraint-generation procedure focuses on linear constraints, in contrast to~\cite[Lemma~5.4]{marcucci2024shortest} which applies to affine constraints of the form considered in Lemma~\ref{lem:linear_vs_affine}.
While our procedure can easily be extended to affine constraints (see~\cite[Lemma~5.1]{marcucci2024graphs}), this is unnecessary since any affine constraint other than $y_v \leq 1$ or $y_v = 1$ reduces to a linear constraint that is at least as tight (see Lemma~\ref{lem:linear_vs_affine}).
In addition, the valid convex constraints generated from $y_v \leq 1$ and $y_v = 1$ via~\cite[Lemma~5.1]{marcucci2024graphs} are easily seen to be redundant for our MICP.
This observation streamlines the formulation of our MICPs by preventing the generation of redundant convex constraints, which are manually identified and eliminated in~\cite[Section~5.3]{marcucci2024shortest}.

Our procedure can also be extended to constraints involving all binary variables, and not just those related to a common vertex.
However, this requires introducing new variables that represent all possible products between a binary and a continuous variable.
In our experience, the resulting MICPs are stronger but much slower to solve than ours due to their larger size.

Finally, the core idea behind Lemma~\ref{lem:valid_ineq} can be extended to a wide class of bilinear constraints, as shown in~\cite[Section~7]{marcucci2024shortest}.
The same reference highlights the close connections between our technique and the Lov\'asz-Schrijver hierarchy~\cite{lovasz1991cones}, as well as other classical relaxation hierarchies~\cite{sherali1990hierarchy,parrilo2000structured,parrilo2003semidefinite,lasserre2001global}.

\section{Some GCS problems and their mixed-integer convex formulations}
\label{sec:gcs_problems}

This section presents several examples of GCS problems together with their MICP formulations.
For each problem, we recall the ILP formulation of the associated graph optimization problem and apply Algorithm~\ref{alg:constraint_generation} to tailor the base MICP~\eqref{eq:micp}.

The problems below admit several variants (e.g., directed versus undirected graphs or nonnegative versus sign-indefinite weights) and multiple ILP formulations.
For brevity, we present only one variant and formulation per problem, even though our techniques apply to the others as well.

\subsection{Shortest path}
\label{sec:spp}

We begin with the SPP in GCS.
The MICP we derive here is the same as that of~\cite{marcucci2024shortest} but is obtained in a simpler and more direct manner.

In the discrete SPP, we consider a directed weighted graph $G = (\cV, \cE)$ with nonnegative vertex and edge weights.
We seek a minimum-weight path (i.e., a sequence of distinct vertices connected by edges) that starts at a source vertex $\sigma \in \cV$ and ends at a target vertex $\tau \in \cV$.
The SPP can be solved in polynomial time using, e.g., Dijkstra's algorithm~\cite{dijkstra1959note}.
It is a special case of problem~\eqref{eq:graph_problem}, where the set $\cH$ of admissible subgraphs is replaced with the set $\cH\pat$ of all paths from $\sigma$ to $\tau$ in $G$.

The SPP can be formulated as an ILP of the form~\eqref{eq:graph_ilp} by substituting $\cY$ with the polytope $\cY\pat$ defined by the following conditions:
\begin{subequations}
\label{eq:path_poly}
\begin{align}
\label{eq:path_poly_ye}
& y_e \geq 0, && e \in \cE, \\
\label{eq:path_poly_yv}
& y_v \leq 1, && v \in \cV \setminus \{\sigma,\tau\}, \\
\label{eq:path_poly_yst}
& y_\sigma = y_\tau = 1, \\
\label{eq:path_poly_term}
& y_v = \sum_{e \in \incoming{v}} y_e, && v \in \cV \setminus \{\sigma\}, \\
\label{eq:path_poly_start}
& y_v = \sum_{e \in \outgoing{v}} y_e, && v \in \cV \setminus \{\tau\}.
\end{align}
\end{subequations}
Here the sets $\incoming{v}$ and $\outgoing{v}$ contain the edges that are incoming and outgoing a vertex $v \in \cV$, and, without loss of generality, we assume that $\incoming{\sigma} = \outgoing{\tau} = \emptyset$.
The first two conditions enforce bounds on the binary variables.
The third ensures that the selected path contains the source $\sigma$ and the target $\tau$.
The last two require that every vertex in the path has exactly one incoming and one outgoing edge, except for the source and target.

The polytope $\cY\pat$ does not meet the incidence condition~\eqref{eq:incidence}, since its binary elements represent a path together with disjoint cycles.
However, it meets the relaxed incidence condition~\eqref{eq:incidence_dominant}, which is sufficient for the validity of the ILP~\eqref{eq:graph_ilp} when the weights are nonnegative.
In addition, $\cY\pat$ also satisfies condition~\eqref{eq:perfect_formulation_dominant} with equality, yielding a perfect ILP formulation and allowing the SPP with nonnegative weights to be solved through a linear program (see, e.g.,~\cite[Section~13.1]{schrijver2003combinatorial}).

The SPP in GCS is obtained by replacing $\cH$ with $\cH\pat$ in problem~\eqref{eq:gcs_problem}.
The nonnegativity of the weights is translated into the nonnegativity of the objective functions $f_v$ for $v \in \cV$ and $f_e$ for $e \in \cE$.
Although the SPP is solvable in polynomial time, the SPP in GCS is NP-hard~\cite[Section~3]{marcucci2024shortest}.

The base MICP~\eqref{eq:micp} is adapted to the SPP in GCS by following the steps in Algorithm~\ref{alg:constraint_generation}.
The inequality~\eqref{eq:path_poly_ye} is mapped by our constraint-generation procedure to the valid convex constraint~\eqref{eq:micp_e0}. 
While the equalities~\eqref{eq:path_poly_term} and~\eqref{eq:path_poly_start} give us the valid linear equalities
\begin{subequations}
\label{eq:path_poly_spatial}
\begin{align}
& \bz_v = \sum_{e \in \incoming{v}} \bz_v^e, && v \in \cV \setminus \{\sigma\}, \\
& \bz_v = \sum_{e \in \outgoing{v}} \bz_v^e, && v \in \cV \setminus \{\tau\}.
\end{align}
\end{subequations}
The resulting MICP formulation of the SPP in GCS is then
\begin{subequations}
\label{eq:micp_spp}
\begin{align}
\minimize \quad & \text{objective~\eqref{eq:micp_obj}} \\
\subjectto \quad
& \by \in \cY\pat\bin, \\
& \text{constraints~\eqref{eq:micp_e0},~\eqref{eq:micp_e_hom},~\eqref{eq:path_poly_spatial}}.
\end{align}
\end{subequations}
Constraint~\eqref{eq:micp_e1} is omitted since, for each $v \in \cV$, the constraints in~\eqref{eq:path_poly} involving only the variables $y_v$ and $y_e$ for $e \in \incident{v}$ already imply the subgraph inequalities $y_v \geq y_e$ for $e \in \delta_v$.

\subsection{Traveling salesman}
\label{sec:tsp}

The TSP is one of the most famous NP-complete problems~\cite{karp1972reducibility}.
Given an undirected weighted graph $G = (\cV, \cE)$, the goal is to find a minimum-weight tour, i.e., a cycle that visits every vertex.
The TSP is a special case of the graph optimization problem~\eqref{eq:graph_problem} where the set $\cH$ is replaced with the set $\cH\tour$ of all tours in $G$.

A classical ILP formulation of the TSP is due to Dantzig, Fulkerson, and Johnson~\cite{dantzig1954solution}.
It is obtained by substituting the set $\cY$ in~\eqref{eq:graph_ilp} with the polytope $\cY\tour$ defined by the conditions
\begin{subequations}
\label{eq:tsp}
\begin{align}
\label{eq:tsp_e}
& 1 \geq y_e \geq 0, && e \in \cE, \\
\label{eq:tsp_v}
& y_v = 1, && v \in \cV, \\
\label{eq:tsp_conservation}
& \sum_{e \in \incident{v}} y_e = 2, && v \in \cV, \\
\label{eq:tsp_subtour}
& \sum_{e \in \cE_\cU} y_e \leq |\cU| - 1, && \cU \subset \cV, \ |\cU| \geq 3.
\end{align}
\end{subequations}
The first condition enforces usual bounds, the second forces the tour to visit every vertex, and the third requires that each vertex has two incident edges.
The last condition is a \emph{subtour-elimination constraint}.
The set $\cE_\cU$ on the left-hand side consists of all edges that have both ends in $\cU$.
If $\cU$ is the vertex set of a subtour (i.e., a cycle not covering every vertex), then the left-hand side equals $|\cU|$ and the constraint is violated.
These subtour-elimination constraints are exponential in number and typically enforced as \emph{lazy constraints}: they are not included in the ILP from the beginning but are added only if a candidate solution violates them during branch and bound.
Despite its exponential size, the ILP  formulation~\eqref{eq:tsp} is not be perfect (see~\cite[Section~21.4]{korte2018combinatorial} for a simple counterexample).

The TSP in GCS corresponds to problem~\eqref{eq:gcs_problem} with $\cH=\cH\tour$.
It is NP-hard since it generalizes the ordinary TSP.
Following Algorithm~\ref{alg:constraint_generation}, constraint~\eqref{eq:tsp_e} is linearized as $y_v \geq y_e \geq 0$, and yields the valid constraints~\eqref{eq:micp_e0} and~\eqref{eq:micp_e1}.
Constraint~\eqref{eq:tsp_conservation} is also linearized as $\sum_{e \in \incident{v}} y_e = 2 y_v$, and gives us
\begin{align}
\label{eq:tsp_spatial}
& \sum_{e \in \incident{v}} \bz_v^e = 2 \bz_v, && v \in \cV.
\end{align}
Our MICP formulation of the TSP in GCS is then
\begin{subequations}
\label{eq:micp_tsp}
\begin{align}
\minimize \quad & \text{objective~\eqref{eq:micp_obj}} \\
\subjectto \quad
& \by \in \cY\tour\bin, \\
& \text{constraints~\eqref{eq:micp_e0},~\eqref{eq:micp_e1},~\eqref{eq:micp_e_hom},~\eqref{eq:tsp_spatial}}.
\end{align}
\end{subequations}

\subsection{Minimum spanning arborescence}
\label{sec:msab}

As a third example, we consider the Minimum Spanning Arborescence Problem (MSAP) in GCS, which is the directed version of the MSTP in GCS illustrated in Figure~\ref{fig:demo_mstp}.
(The analysis of the MSTP is similar to the one of the TSP.)
In the ordinary MSAP, we are given a directed graph $G = (\cV, \cE)$ with a root vertex $r \in \cV$, and we seek a minimum-weight spanning arborescence in $G$, i.e., an acyclic subgraph where every vertex can be reached from the root through a unique path.
This problem is solvable in polynomial time~\cite{chu1965shortest,edmonds1967optimum}, and is a special case of the graph optimization problem~\eqref{eq:graph_problem} where $\cH=\cH\arb$ is the set of all spanning arborescences in $G$.

The polytope $\cY\arb$, defined by the following conditions, allows us to formulate the MSAP as an ILP of the form~\eqref{eq:graph_ilp}:
\begin{subequations}
\label{eq:msap}
\begin{align}
\label{eq:msap_e}
& y_e \geq 0, && e \in \cE, \\
\label{eq:msap_v}
& y_v = 1, && v \in \cV, \\
\label{eq:msap_degree}
& \sum_{e \in \incoming{v}} y_e = 1, && v \in \cV \setminus \{r\}, \\
\label{eq:msap_cutset}
& \sum_{e \in \incoming{\cU}} y_e \geq 1, && \cU \subseteq \cV \setminus \{r\}, \ |\cU| \geq 2.
\end{align}
\end{subequations}
Without loss of generality, this formulation assumes that the root has no incoming edges: $\incoming{r} = \emptyset$.
The first condition is as usual, the second ensures that every vertex is reached, and the third requires that every vertex has one incoming edge, except for the root.
The last condition is a \emph{cutset constraint} that guarantees connectivity by requiring that every subset $\cU$ of vertices, that does not include the root, has at least one incoming edge.
(We denote by $\incoming{\cU}$ the set of edges $(v,w)$ with $w \in \cU$.)
Like the subtour-elimination constraints, the cutset constraints are exponential in number and enforced as lazy constraints in practice.
Contrarily to the TSP, this exponential-size formulation of the MSAP is perfect (see, e.g.,~\cite[Corollary~52.3b]{schrijver2003combinatorial}).

The MSAP in GCS is obtained from problem~\eqref{eq:gcs_problem} by letting $\cH=\cH\arb$.
It is NP-hard as it generalizes the NP-hard MSTP with neighborhoods analyzed in~\cite[Theorem~1]{yang2007minimum}.
As already noted, our constraint-generation procedure maps constraint~\eqref{eq:msap_e} to~\eqref{eq:micp_e0}.
While the affine constraint~\eqref{eq:msap_degree} is mapped to the linear constraint
\begin{align}
\label{eq:msap_spatial}
& \sum_{e \in \incoming{v}} \bz_v^e = \bz_v, && v \in \cV \setminus \{r\}.
\end{align}
Our MICP formulation of the MSAP in GCS is then
\begin{subequations}
\label{eq:micp_msap}
\begin{align}
\minimize \quad & \text{objective~\eqref{eq:micp_obj}} \\
\subjectto \quad
& \by \in \cY\arb\bin, \\
& \text{constraints~\eqref{eq:micp_e0},~\eqref{eq:micp_e_hom},~\eqref{eq:msap_spatial}}, \\
\label{eq:micp_msap_outgoing}
& (\bz_v - \bz_v^e, 1 - y_e) \in \tilde \cX_v, && e = (v,w) \in \cE.
\end{align}
\end{subequations}
The last constraint of this MICP is part of~\eqref{eq:micp_e1}.
It is kept by Algorithm~\ref{alg:constraint_generation} since the constraints in~\eqref{eq:msap} associated with a vertex $v \in \cV$ do not imply the subgraph inequality $y_v \geq y_e$ for $e \in \outgoing{v}$.

\subsection{Facility location}
\label{sec:flp}

In the Facility-Location Problem (FLP) we are given a weighted graph $G$ that is undirected and bipartite.
The vertices $\cV$ are composed by two subsets, the facilities $\cB$ and the clients $\cC$.
Every edge connects a facility and a client.
We seek a minimum-weight assignment of each client to a facility, i.e., a subgraph where each client in $\cC$ has exactly one incident edge.
The FLP is NP-hard~\cite[Proposition~22.1]{korte2018combinatorial}, and is a special case of problem~\eqref{eq:graph_problem} where the set $\cH=\cH\assig$ consists of all assignments in $G$.

The ILP~\eqref{eq:graph_ilp} models the FLP when the polytope $\cY = \cY\assig$ enforces
\begin{subequations}
\label{eq:flp}
\begin{align}
\label{eq:flp_ve}
& 1 \geq y_v \geq y_e \geq 0, && v \in \cB, \ e \in \incident{v}, \\
\label{eq:flp_v}
& y_v = \sum_{e \in \incident{v}} y_e = 1, && v \in \cC.
\end{align}
\end{subequations}
The first condition is a standard subgraph constraint and the second ensures that each client is assigned to exactly one facility.
This ILP formulation is compact but can be weak (as expected, given the problem hardness).

The FLP in GCS corresponds to problem~\eqref{eq:gcs_problem} with $\cH=\cH\assig$.
It is NP-hard since the FLP is NP-hard.
As seen already, constraint~\eqref{eq:flp_ve} yields
\begin{subequations}
\label{eq:flp_spatial_2}
\begin{align}
& (\bz_v^e, y_e) \in \tilde \cX_v, && v \in \cV, \ e \in \incident{v}, \\
& (\bz_v - \bz_v^e, y_v - y_e) \in \tilde \cX_v, && v \in \cB, \ e \in \incident{v},
\end{align}
\end{subequations}
while constraint~\eqref{eq:flp_v} gives us
\begin{align}
\label{eq:flp_spatial_1}
& \bz_v = \sum_{e \in \incident{v}} \bz_v^e, && v \in \cC.
\end{align}
The MICP resulting from Algorithm~\ref{alg:constraint_generation} is then
\begin{subequations}
\label{eq:micp_flp}
\begin{align}
\minimize \quad & \text{objective~\eqref{eq:micp_obj}} \\
\subjectto \quad
& \by \in \cY\assig\bin, \\
& \text{constraints~\eqref{eq:micp_e_hom},~\eqref{eq:flp_spatial_2},~\eqref{eq:flp_spatial_1}}.
\end{align}
\end{subequations}

\subsection{Bipartite matching}
\label{sec:bmp}

The Bipartite-Matching Problem (BMP) is a special case of the FLP in which the number of facilities $\cB$ and clients $\cC$ is equal, and each facility must be matched with exactly one client.
The ILP formulation of the BMP is given by the following two constraints, which define the polytope $\cY\match$:
\begin{subequations}
\label{eq:bmp}
\begin{align}
& y_e \geq 0, && e \in \cE, \\
\label{eq:bmp_v}
& y_v = \sum_{e \in \incident{v}} y_e = 1, && v \in \cV.
\end{align}
\end{subequations}
This formulation is perfect (see, e.g.,~\cite[Theorem~18.1]{schrijver2003combinatorial}) and the BMP is solvable in polynomial time.

The BMP in GCS is also efficiently solvable, as it reduces to the ordinary BMP.
To show this, we fix a matching in the GCS and consider one of its edges $e=\{v,w\} \in \cE$.
The optimal values of $\bx_v$ and $\bx_w$ are independent of the other continuous variables, and can be computed via the convex program
\begin{subequations}
\label{eq:bmp_wegths}
\begin{align}
\minimize \quad &  \bc_v^\top \bx_v + \bc_w^\top \bx_w +
\bc_e^\top (\bx_v, \bx_w) \\
\subjectto \quad
& \bx_v \in \cX_v, \
\bx_w \in \cX_w, \
(\bx_v, \bx_w) \in \cX_e,
\end{align}
\end{subequations}
where the objective is linear by Assumption~\ref{ass:linear}.
We then consider the weighted graph obtained by setting the weight of each edge $e$ in the GCS to the optimal value of problem~\eqref{eq:bmp_wegths}, and all vertex weights to zero.
A matching in this weighted graph is optimal if and only if it is optimal in the original GCS.

Since the BMP in GCS is reducible to the ordinary BMP via $|\cE|$ small convex programs, mixed-integer programming is not a practical way of solving this problem.
Nevertheless, it is noteworthy that our MICP formulation is perfect for this problem, and allows us to solve the BMP in GCS via a single convex program.
Through the usual steps, our MICP is given by
\begin{subequations}
\label{eq:micp_bmp}
\begin{align}
\minimize \quad & \text{objective~\eqref{eq:micp_obj}} \\
\label{eq:micp_bmp_y}
\subjectto \quad
& \by \in \cY\match\bin, \\
& \text{constraints~\eqref{eq:micp_e0},~\eqref{eq:micp_e_hom}}, \\
\label{eq:bmp_spatial}
& \bz_v = \sum_{e \in \incident{v}} \bz_v^e, && v \in \cV,
\end{align}
\end{subequations}
where the last constraint is generated from~\eqref{eq:bmp_v}.

\begin{proposition}[Convex formulation of BMP in GCS]
\label{prop:bmp}
The convex relaxation of the MICP~\eqref{eq:micp_bmp}, obtained by replacing $\cY\match\bin$ with $\cY\match$, is exact.
\end{proposition}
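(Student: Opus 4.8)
The plan is to eliminate the vertex variables $\bz_v$ from the convex relaxation of~\eqref{eq:micp_bmp} using the spatial constraint~\eqref{eq:bmp_spatial}, thereby reducing it to a linear program over the fractional bipartite-matching polytope, and then to invoke the perfectness of that polytope. First I would substitute $\bz_v = \sum_{e\in\incident{v}}\bz_v^e$ into the objective~\eqref{eq:micp_obj}. Since each edge $e=\{v,w\}$ is incident to exactly its two endpoints, the vertex part $\sum_{v\in\cV}\bc_v^\top\bz_v$ rearranges into $\sum_{e=\{v,w\}\in\cE}(\bc_v^\top\bz_v^e+\bc_w^\top\bz_w^e)$, so the whole objective becomes a sum of per-edge terms
\[
\sum_{e=\{v,w\}\in\cE} g_e(\bz_v^e,\bz_w^e), \qquad g_e(\bz_v^e,\bz_w^e) := \bc_v^\top\bz_v^e + \bc_w^\top\bz_w^e + \bc_e^\top(\bz_v^e,\bz_w^e).
\]
After this substitution $\bz_v$ and $y_v$ no longer appear and may be dropped (recovering them afterward from~\eqref{eq:bmp_spatial} and $y_v=1$), leaving the relaxation with only the per-edge constraints $(\bz_v^e,y_e)\in\tilde\cX_v$, $(\bz_w^e,y_e)\in\tilde\cX_w$, $(\bz_v^e,\bz_w^e,y_e)\in\tilde\cX_e$ for each $e=\{v,w\}$, together with the vertex constraints $\sum_{e\in\incident{v}}y_e=1$ (the bounds $y_e\ge 0$ being already implied by homogenization).

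Second, I would establish the perspective inequality $g_e(\bz_v^e,\bz_w^e)\ge y_e\,w_e$, where $w_e$ denotes the optimal value of the edge subproblem~\eqref{eq:bmp_wegths}. When $y_e>0$, dividing the three homogenization memberships above by $y_e$ shows that $(\bz_v^e/y_e,\bz_w^e/y_e)$ is feasible for~\eqref{eq:bmp_wegths}, so $g_e(\bz_v^e/y_e,\bz_w^e/y_e)\ge w_e$ and the inequality follows by linearity of $g_e$; when $y_e=0$, boundedness of $\cX_v$ and $\cX_w$ (Assumption~\ref{ass:bounded}) forces $\bz_v^e=\bz_w^e=\bzero$, so both sides vanish. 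Summing over edges, every feasible point of the relaxation has objective at least $\sum_{e\in\cE}w_e y_e$, and because $\sum_{e\in\incident{v}}y_e=1$ and $y_e\ge 0$ this is at least $\min\{\sum_{e\in\cE}w_e y_e : \by\in\cY\match\}$.

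Third, I would close the chain of inequalities. The ILP formulation with polytope $\cY\match$ is perfect (it is the bipartite-matching polytope; cf.~\cite[Theorem~18.1]{schrijver2003combinatorial}), so the last minimum equals the minimum weight of a perfect matching of $G$ for the edge weights $w_e$; by the reduction described just before the proposition, this equals the optimal value of the BMP in GCS, which by Theorem~\ref{th:micp} equals the optimal value of the MICP~\eqref{eq:micp_bmp}. Since the convex relaxation is a relaxation of that MICP, its optimal value also lies below the MICP's, so all these quantities coincide. To confirm exactness in the strong sense I would then exhibit an integral optimizer: pick a minimum-weight perfect matching $M^\star$, set $y_e=1$ for $e\in M^\star$ and $y_e=0$ otherwise, $y_v=1$, let $\bz_v^e$ be an optimizer of~\eqref{eq:bmp_wegths} when $e\in M^\star$ and $\bz_v^e=\bzero$ otherwise, and $\bz_v=\sum_{e\in\incident{v}}\bz_v^e$; a direct check shows this point is feasible for~\eqref{eq:micp_bmp} and attains the common optimal value, so the relaxation is exact.

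The main difficulty I anticipate is less a hard step than careful bookkeeping: verifying that after the substitution $\bz_v=\sum_{e\in\incident{v}}\bz_v^e$ the objective splits cleanly into edge terms and that no remaining constraint couples different edges, so the program genuinely decomposes; and handling the $y_e=0$ boundary case of the perspective bound, where boundedness of the vertex sets is essential. One should also dispose of the degenerate situation in which $G$ has no perfect matching, or some edge subproblem~\eqref{eq:bmp_wegths} is infeasible, in which case both the MICP and its convex relaxation are infeasible and the claim holds vacuously.
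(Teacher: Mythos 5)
Your proposal follows essentially the same route as the paper's proof in Appendix~\ref{sec:proof_bmp}: eliminate $\bz_v$ via~\eqref{eq:bmp_spatial}, decompose the relaxation into independent per-edge subproblems whose optimal value is $y_e$ times the optimal value of~\eqref{eq:bmp_wegths}, reduce to the linear program over $\cY\match$, and invoke the perfectness of the bipartite-matching formulation. You are somewhat more explicit than the paper about the $y_e=0$ case and about exhibiting an integral optimizer, but the argument is the same and is correct.
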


\begin{proof}
See Appendix~\ref{sec:proof_bmp}.
\end{proof}

\section{Unbounded constraint sets and nonlinear objective functions}
\label{sec:nonlinear}

The techniques in the previous sections rely on Assumptions~\ref{ass:bounded} and~\ref{ass:linear}, which require the GCS constraint sets and objective functions to be bounded and linear, respectively.
Here we show that the same techniques continue to apply under the weaker assumption below.

\begin{definition}[Recession]
A \emph{recession direction} of a closed convex set $\cX \subseteq \reals^n$ is a direction $\bx \in \reals^n$ in which the set extends indefinitely, i.e., $\bx_0 + y \bx \in \cX$ for all $\bx_0 \in \cX$ and $y \geq 0$.
The \emph{recession cone} $\cX^\infty$ of $\cX$ is the set of all recession directions of $\cX$.
\end{definition}

\begin{assumption}[Superlinear vertex objectives]
\label{ass:recession_v}
For all vertices $v \in \cV$, the objective function $f_v$ grows superlinearly along all nonzero recession directions of the set $\cX_v$, i.e., $\lim_{y \rightarrow \infty} f_v(y \bx) / y = \infty$ for all $\bx \in \cX^\infty \setminus \{\bzero\}$.
\end{assumption}
Note that this assumption is trivially satisfied if the sets $\cX_v$ are bounded, since the only recession direction of a bounded set is zero.
We also highlight that the analysis of the SPP in GCS in~\cite{marcucci2024shortest} relies on this stronger boundedness assumption, which is relaxed here.

\subsection{Homogenization of unbounded sets}
\label{sec:homog_unbd}

Dropping Assumptions~\ref{ass:bounded} and~\ref{ass:linear} requires extending the homogenization operation to unbounded sets.

\begin{definition}[Homogenization of closed set]
\label{def:homog_unbd}
The \emph{homogenization} of a closed convex set $\cX \in \reals^n$ is the set
$$
\tilde \cX = \cl(\{(\bx, y) \in \reals^{n+1}:  y \geq 0, \ \bx \in y \cX \}),
$$
where $\cl$ denotes the set closure.
\end{definition}
The only difference between this definition and Definition~\ref{def:homog} is the closure, whose need is illustrated by the following example.

\begin{example}
\label{ex:hom_unbd}
Consider the unbounded interval $\cX = [1,\infty) \subset \reals$.
Its homogenization is the closure of the set $\{(x,y) : 0 < y \leq x \} \cup \{(0,0)\}$.
Taking the closure includes the positive $x$-axis (i.e., the recession cone $\cX^\infty$), yielding the set $\tilde \cX = \{(x, y) : 0 \leq y \leq x \}$ shown in Figure~\ref{fig:homog_unbounded}.
\end{example}

\begin{figure}
\centering
\begin{tikzpicture}[scale=1.3]
\draw[->] (-0.1,0) -- (3.1,0) node[right] {$x$};
\draw[->] (0,-0.1) -- (0,1.6) node[above] {$y$};
\foreach \x in {0, 1, 2} {\draw (\x,0.05) -- (\x,-0.05) node[below] {\small $\x$};}
\foreach \y in {0, 1} {\draw (0.05,\y) -- (-0.05,\y) node[left] {\small $\y$};}
\fill[blue!20,opacity=0.4] (0,0) -- (3,0) -- (3,1.5) -- (1.5,1.5) -- cycle;
\draw[thick,blue] (0,0) -- (1.5,1.5);
\draw[thick,blue] (0,0) -- (3,0);
\draw[dashed]  (0,1) -- (1,1);
\draw[dashed]  (1,0) -- (1,1);
\draw[very thick,red] (1,0) -- (3,0);
\draw[very thick,red,dashed] (1,1) -- (3,1);
\draw (1.5,0) node[above] {\color{red} $\cX$};
\draw (2,1.333333333) node {\color{blue} $\tilde \cX$};
\end{tikzpicture}
\caption{Homogenization $\tilde \cX$ of the unbounded interval $\cX = [1, \infty)$.}
\label{fig:homog_unbounded}
\end{figure}
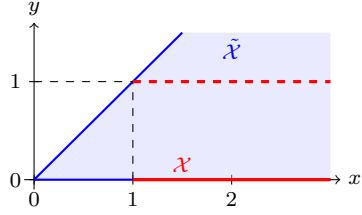

The homogenization $\tilde \cX$ of an unbounded set is still a closed convex cone, and we also still have that $(\bx, 1) \in \tilde \cX$ if and only if $\bx \in \cX$.
However, the condition $(\bx, 0) \in \tilde \cX$ is not equivalent to $\bx = \bzero$ in the unbounded case, but holds if and only if $\bx$ is a recession direction of $\cX$, i.e., $\bx \in \cX^\infty$~\cite[Theorem~8.2]{rockafellar1970convex}.

\subsection{Epigraph formulation of the GCS problem}

We handle nonlinear GCS objective functions by working with their epigraphs, so that the only difficulty is the unboundedness of the GCS constraint sets.
The latter is addressed by retracing the steps from Section~\ref{sec:method} and adapting the proofs that rely on the boundedness assumption.

We introduce slack variables $s_v$ for $v \in \cV$ and $s_e$ for $e \in \cE$, and define new GCS convex sets that are the epigraphs of the GCS objective functions:
\begin{align*}
& \cX_v' = \{(\bx_v, s_v): \bx_v \in \cX_v, \ s_v \geq f_v(\bx_v)\}, && v \in \cV, \\
& \cX_e' = \{(\bx_v, \bx_w, s_e): (\bx_v, \bx_w) \in \cX_e, \ s_e \geq f_e(\bx_v, \bx_w)\}, && e = [v,w] \in \cE.
\end{align*}
Observe that, in terms of these new sets, Assumption~\ref{ass:recession_v} states that any recession direction $(\bx_v, s_v)$ of $\cX_v'$ must be such that $\bx_v = \bzero$ and $s_v \geq 0$.
We then reformulate the GCS problem~\eqref{eq:gcs_problem} as follows:
\begin{subequations}
\label{eq:epigraph_problem}
\begin{align}
\minimize \quad & \sum_{v \in \cW} s_v + \sum_{e \in \cF} s_e \\
\subjectto \quad
& H = (\cW, \cF) \in \cH, \\
& (\bx_v, s_v) \in \cX_v', && v \in \cW, \\
& (\bx_v, \bx_w, s_e) \in \cX_e', && e = [v,w] \in \cF.
\end{align}
\end{subequations}
The only difference between this problem and the one considered in Section~\ref{sec:method} is that the new sets $\cX_v'$ and $\cX_e'$ are unbounded, and violate Assumption~\ref{ass:bounded}.

Assumption~\ref{ass:bounded} was first used to omit the constraint $\bz_v = y_v \bx_v$ from the MINCP~\eqref{eq:mincp}.
In particular, it ensured that, if $y_v=0$, then the constraint $(\bz_v, y_v) \in \tilde \cX_v$ enforces $\bz_v = \bzero$, and the objective contribution of vertex $v$ is zero.
To show that the same holds under Assumption~\ref{ass:recession_v}, consider the MINCP formulation of problem~\eqref{eq:epigraph_problem}, which parallels the MINCP~\eqref{eq:mincp}:
\begin{subequations}
\label{eq:epigraph_mincp}
\begin{align}
\minimize \quad &
\sum_{v \in \cV} t_v + \sum_{e \in \cE} t_e \\
\subjectto \quad
& \by \in \cY\bin, \\
& (\bz_v, t_v, y_v) \in \tilde \cX_v', && v \in \cV, \\
& (\bz_v^e, \bz_w^e, t_e, y_e) \in \tilde \cX_e', && e = [v,w] \in \cE, \\
& \bz_v^e = y_e \bz_v, && v \in \cV,\ e \in \incident{v},
\end{align}
\end{subequations}
where we introduced the auxiliary variables $t_v = y_v s_v$ for $v \in \cV$ and $t_e = y_e s_e$ for $e \in \cE$.
When $y_v=0$, the second constraint forces $(\bz_v, t_v)$ to be a recession direction of $\cX_v'$.
As noted above, this implies that $\bz_v = \bzero$ and $t_v \geq 0$.
Since $t_v$ does not appear in other constraints and is minimized by the objective, its optimal value is zero as desired.

The constraint-generation procedure in Lemma~\ref{lem:valid_ineq} is easily adapted to the MINCP~\eqref{eq:epigraph_mincp}.
The valid convex constraint~\eqref{eq:valid_ineq_lift} is now
\begin{align}
\label{eq:epigraph_valid_ineq_lift} 
a (\bz_v, t_v, y_v) + \sum_{e \in \incident{v}} b_e (\bz_v^e, t_v^e, y_e) \in \tilde \cX_v',
\end{align}
where $t_v^e = y_e s_v$ for all $v \in \cV$ and $e \in \incident{v}$.
This leads us to the following MICP formulation of problem~\eqref{eq:epigraph_problem}, which parallels~\eqref{eq:micp}:
\begin{subequations}
\label{eq:epigraph_micp}
\begin{align}
\minimize \quad &
\sum_{v \in \cV} t_v + \sum_{e \in \cE} t_e \\
\subjectto \quad
\label{eq:epigraph_micp_Y}
& \by \in \cY\bin, \\
\label{eq:epigraph_micp_e0}
& (\bz_v^e, t_v^e, y_e) \in \tilde \cX_v', && v \in \cV,\ e \in \incident{v}, \\
\label{eq:epigraph_micp_e1}
& (\bz_v - \bz_v^e, t_v - t_v^e, y_v - y_e) \in \tilde \cX_v', && v \in \cV,\ e \in \incident{v}, \\
\label{eq:epigraph_micp_e_hom}
& (\bz_v^e, \bz_w^e, t_e, y_e) \in \tilde \cX_e', && e = [v,w] \in \cE.
\end{align}
\end{subequations}

The second use of Assumption~\ref{ass:bounded} was in the proof of Theorem~\ref{th:micp}, where we showed that the valid convex constraints imply the original bilinear constraint.
The proof that constraints~\eqref{eq:epigraph_micp_e0} and~\eqref{eq:epigraph_micp_e1} imply $\bz_v^e = y_e \bz_v$ for all $v \in \cV$ and $e \in \incident{v}$ is essentially unchanged.
However, we must now also verify that the bilinear constraint $t_v^e = y_e t_v$ holds at optimality of the MICP.
\begin{itemize}
\item 
When $y_v=0$, we have $y_e=0$ for all $e \in \incident{v}$.
Constraint~\eqref{eq:epigraph_micp_e0} and~\eqref{eq:epigraph_micp_e1} give us $t_v^e \geq 0$ and $t_v \geq t_v^e$, respectively.
Since the variables $t_v$ and $t_v^e$ do not appear in other constraints, and $t_v$ is minimized, their optimal values are $t_v = t_v^e = 0$.
\item 
Assume that $y_v=1$.
If $y_e = 0$ for some $e \in \incident{v}$, constraints~\eqref{eq:epigraph_micp_e0} and~\eqref{eq:epigraph_micp_e1} give us 
$t_v^e \geq 0$ and $t_v - t_v^e \geq f_v (\bz_v)$.
While if $y_e = 1$, we get $t_v \geq t_v^e \geq f_v (\bz_v)$.
Therefore, at optimality, we have $t_v^e = 0$ if $y_e=0$ and $t_v^e = t_v$ if $y_e=1$.
\end{itemize}

The variants of the constraint-generation procedure in Section~\ref{sec:variants} and the multiple MICPs in Section~\ref{sec:gcs_problems} are easily adapted to the weaker assumptions of this section, analogously to how Lemma~\ref{lem:valid_ineq} is adapted in~\eqref{eq:epigraph_valid_ineq_lift}.

\section{Numerical examples}
\label{sec:examples}

We present multiple numerical examples of the GCS problems introduced in Section~\ref{sec:gcs_problems}, as well as a comparison between our MICP and alternative approaches for solving GCS problems globally.
The results can be reproduced using the \texttt{GCSOPT} library described in Section~\ref{sec:gcsopt} below.
The computer used for the experiments is a laptop with processor 2.4 GHz 8-Core Intel Core i9 and memory 64 GB 2667 MHz DDR4.
The MICP solver is \texttt{Gurobi~12.0.3}.

\subsection{Minimum-time flight of a solar-powered helicopter}
\label{sec:helicopter}

We consider a solar-powered helicopter that flies between two islands of an archipelago in minimum time.
When the battery runs low, the helicopter can take a recharging break on any island, at the cost of increasing the flight time.

The archipelago is shown in the top of Figure~\ref{fig:spp_helicopter} and composed of $I=300$ islands.
For $i=1, \ldots, I$, each island is a circle $\cC_i \subset \reals^2$ with center $\bc_i$ and radius $r_i$ drawn uniformly from the intervals $[(0,0), (5,2)]$ and $[0.02, 0.1]$, respectively.
A sampled island is rejected if it intersects with existing islands.
The start island is in the bottom left (labeled with $i=1$) and the goal island is in the top right (labeled with $i=I$).
The flight speed is constant and equal to $s=1$.
The battery level over time is described by the function $b : \reals \rightarrow [0,1]$.
It decreases at rate $\alpha = 5$ when the helicopter is flying, and increases at rate $\beta = 1$ during a recharging break.
Initially, the battery is fully charged.

\begin{figure}
\centering
\includegraphics[width=\columnwidth]{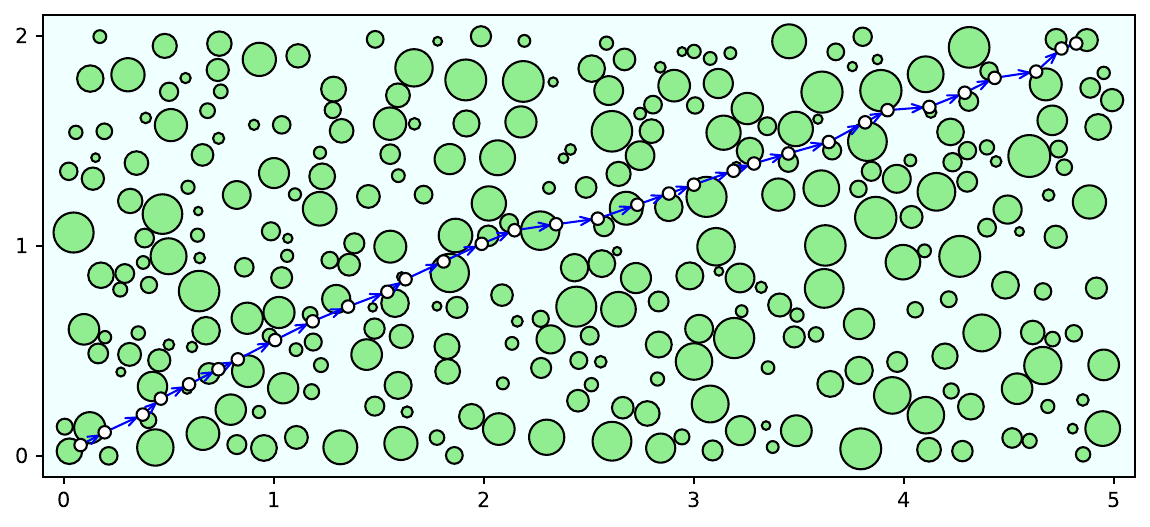}
\includegraphics[width=\columnwidth]{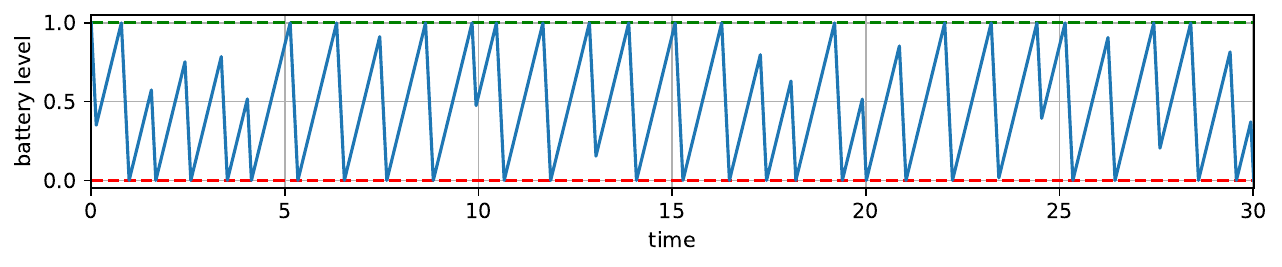}
\caption{
Example of SPP in GCS: a helicopter powered by solar energy traverses an archipelago in minimum time.
\emph{Top:} optimal flight trajectory.
\emph{Bottom:} battery level as a function of time.
}
\label{fig:spp_helicopter}
\end{figure}

Figure~\ref{fig:spp_helicopter} shows an optimal flight and the corresponding battery level $b$.
The flight requires $30$ recharging breaks and takes a total time of $30.01$.

We formulate the problem as an SPP in GCS.
Our GCS has one vertex per island, $\cV = \{1, \ldots, I\}$.
The source vertex $\sigma=1$ is paired with the start island, and the target vertex $\tau=I$ with the goal island.
Each vertex $i = 1, \ldots, I$ has continuous variable $\bx_i = (\bp_i, \bb_i) \in \reals^4$: the vector $\bp_i \in \reals^2$ represents the recharge point if the helicopter stops on the $i$th island, and $\bb_i \in \reals^2$ contains the battery level before and after the stop.
(The variables $b_{1,1}$ and $b_{I,2}$ are actually redundant, but simplify the notation.)
The recharging time on the $i$th island is a linear function of the battery levels, $t_i = (b_{i,2} - b_{i,1}) / \beta$.
The convex set $\cX_i$ forces the recharge point to lie on the island, $\bp_i \in \cC_i$, the battery levels to not exceed their limits, $\bb_i \in [0, 1]^2$, and the recharging time to be nonnegative, $t_i \geq 0$.
The set $\cX_1$ paired with the start island also ensures that the battery is fully charged at the beginning of the flight, $b_{1,2} = 1$.
The objective function of each vertex $i$ is equal to the recharging time $t_i$.

We connect two islands with an edge if the helicopter can fly between them starting with full battery.
Specifically, for $i,j =1, \ldots, I$, we have $(i,j) \in \cE$ if only if $i \neq j$ and $\|\bc_j - \bc_i\|_2 - (r_i + r_j) \leq s / \alpha$.
This leads to a graph with $|\cE| = 2146$ edges.
The cost of edge $(i,j)$ is equal to the flight time between islands $i$ and $j$, which is computed from the battery levels as $t_{ij} = (b_{i,2} - b_{j,1}) / \alpha$.
The edge constraints require that this value is not smaller than the flight distance divided by the speed: $t_{ij} \geq \|\bq_j - \bq_i\|_2 / s$.
(Although we would like to enforce this constraint as a nonconvex equality, the convex inequality is sufficient since it is always tight at optimality.)

The problem is automatically translated into the MICP~\eqref{eq:micp_spp} and solved with \texttt{Gurobi}.
The homogenization of the quadratic vertex and edge constraints leads to a Mixed-Integer Second-Order-Cone Program (MISOCP), as discussed in Section~\ref{sec:gcsopt_micp} below.
The convex relaxation of this MISOCP nearly preserves the exactness of the ordinary SPP relaxation: its optimal value is $29.96$ and the relaxation gap is only $0.2\%$.
The MISOCP solution time is $23.5$~s.\footnote{
For this problem, we set the parameter \texttt{PreMIQCPForm} to $1$, which forces \texttt{Gurobi} to handle the model as an MISOCP.
With the default setting, the convex relaxation would instead be solved through a sequence of linear programs, and the quadratic terms linearized during branch and bound.
Although this linearization can be beneficial when many iterations are needed, it is counterproductive in our case, where the convex relaxation is essentially exact.}

Figure~\ref{fig:flight_bounds} shows the progress of the lower and upper bounds during branch and bound.
The first feasible solution is found after approximately $10$~s and is globally optimal.
The rest of the time is spent proving optimality by tightening the lower bound coming from the convex relaxation.

\begin{figure}
\centering
\includegraphics[width=\columnwidth]{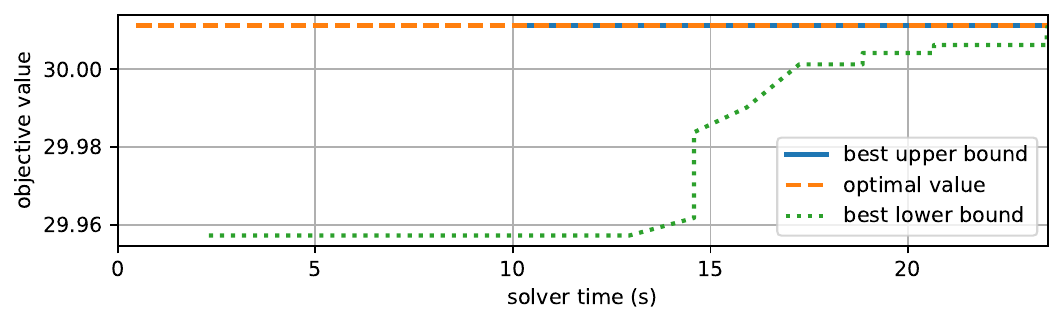}
\caption{
Bounds on the optimal value of the helicopter-flight problem during branch and bound.
The convex relaxation is almost exact (note the short range on the vertical axis) and the first feasible solution found is globally optimal.
}
\label{fig:flight_bounds}
\end{figure}

\subsection{Optimal school-bus tour}
\label{sec:bus}

It is the end of summer in Manhattan, and the local school is planning the route of the bus that will pick up the kids every morning.
Due to heavy traffic, the school decides to optimize the pick-up points and have the children walk a few blocks to meet the bus.

The school is located at the intersection of the $45$th street and the $7$th avenue: $\bp_0 := (45, 7)$.
The number of kids is $I = 18$, and the house of each kid has integer position $\bp_i$ drawn uniformly at random between $(30, 1)$ and $(59, 11)$ for $i=1, \ldots, I$.
The goal is to find integer pick-up points $\bx_i$, for $i=1, \ldots, I$, that minimize the sum of the distances traveled by the school bus and the kids (distances are measured using the $\cL_1$ norm).
The parents do not want the kids to walk for more than $d_{\max} = 3$ blocks.
Figure~\ref{fig:bus_tour} shows the optimal solution of this problem.

\begin{figure}
\centering
\includegraphics[width=\columnwidth]{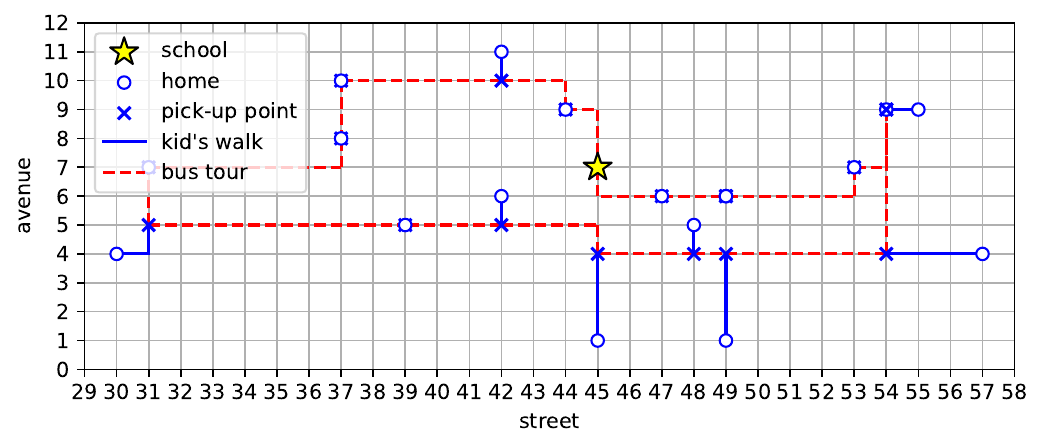}
\caption{
Optimal school-bus tour as a TSP in GCS.
The pick-up points of the $18$ kids minimize the sum of the $\cL_1$ distances traveled by the bus and the kids.
}
\label{fig:bus_tour}
\end{figure}

We solve the problem as a TSP in GCS.
We construct an undirected graph $G=(\cV, \cE)$ with $|\cV| = I + 1  = 19$ vertices.
One vertex represents the school and the others represent the kids.
The school vertex is labeled as zero and has continuous variable $\bx_0$.
The set $\cX_0$ enforces the equality $\bx_0 = \bp_0$, and the objective function $f_0$ is zero.
The kid's vertices are labeled as $i=1, \ldots, I$, and have variables $\bx_i \in \reals^2$.
The objective function for each kid is $f_i(\bx_i) = \|\bx_i - \bp_i \|_1$.
The set $\cX_i$ enforces the constraint $f_i(\bx_i) \leq d_{\max}$, as required by the parents.
The graph is fully connected, yielding a total of $|\cE| = 120$ edges.
Each edge $e = \{i,j\}$ has objective function equal to the distance traveled by the bus, $f_e(\bx_i, \bx_j) = \|\bx_j - \bx_i\|_1$.
Although this GCS allows the pick-up points $\bx_i$ to be fractional, it can be verified that there always exists an optimal solution with integral pick-up points, thanks to the $\cL_1$ metric and the integral home positions $\bp_i$.

Since the homogenization of a polyhedral set is a polyhedral cone (see Section~\ref{sec:gcsopt_micp} below), the MICP~\eqref{eq:micp_tsp} is a Mixed-Integer Linear Program (MILP) in this case.
The subtour-elimination constraints~\eqref{eq:tsp_subtour} are added as lazy constraints: every time an integer solution is found, we check if it contains subtours and, if it does, we eliminate a subtour with the smallest number of edges.
The problem has optimal value equal to $79.0$ and the solver runtime is $18.7$~s.
The MICP convex relaxation, without the subtour-elimination constraints, has optimal value $57.1$, yielding a relaxation gap of $27.7\%$.
This relatively large relaxation gap reflects the weakness of the ILP formulation~\eqref{eq:tsp} of the ordinary TSP, that our MICP builds upon.

Figure~\ref{fig:bus_bounds} shows the branch-and-bound progress.
The lower bound grows steadily towards the optimum, while the upper bound decreases very rapidly.
A near-optimal solution is found in approximately $2.5$~s, and the rest of the time is spent proving optimality.

\begin{figure}
\centering
\includegraphics[width=\columnwidth]{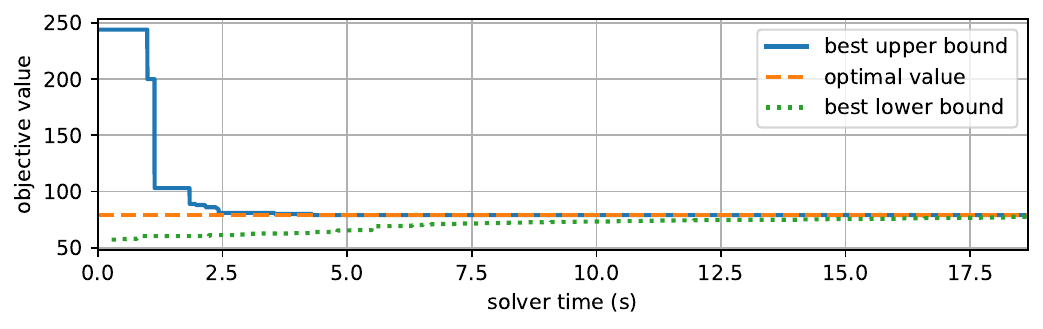}
\caption{
Bounds on the optimal value of the school-bus problem.
An optimal solution is found quickly, and most of the solve time is spent proving optimality.
}
\label{fig:bus_bounds}
\end{figure}

\subsection{Camera positioning with visibility constraints}
\label{sec:surveillance}

As an example of the MSAP in GCS, we consider the problem of positioning surveillance cameras in a floor with many rooms.
A camera is to be installed in each room, as close as possible to the room center.
Each camera must be visible to at least another camera, ensuring that any attempt to disable it is recorded by a neighboring device.
Furthermore, starting from any camera and tracing back through the camera that sees it, one must eventually reach the camera located in the main room of the building, where the central alarm system is located.
In other words, the visibility graph of the cameras must form a spanning arborescence rooted at the main room, such that every room is connected to the main room through a visibility path.
(This problem is similar in spirit to classical problems in computational geometry such as the art-gallery and watchman-route problems~\cite{o1987art,chin1986optimum}.)

The rooms are axis-aligned rectangles arranged on a uniform grid of size $I = 60$ by $J = 15$, for a total of $I J = 900$ rooms.
The room centers have coordinates $(i,j)$ for $i = 1, \ldots, I$ and $j = 1, \ldots, J$.
If $i + j$ is even (respectively, odd), the room centered at $(i,j)$ has horizontal and vertical (respectively, vertical and horizontal) sides drawn uniformly from the intervals $[4/3, 2]$ and $[2/3, 1]$.
This ensures that the room intersects only with the four neighboring rooms, centered at $(i \pm 1, j)$ and $(i, j \pm 1)$.
The main room (i.e., the room that all rooms should be connected to) has center $(0,0)$.

We model the problem via a directed GCS with one vertex per room, $|\cV| = 900$.
For each $v \in \cV$, the variable $\bx_v \in \reals^2$ represents the camera position, and the set $\cX_v$ constrains it to lie within the room boundaries.
The objective function $f_v$ is a weighted $\cL_\infty$ norm that evaluates to zero at the room center and $0.1$ at the room walls.
We connect every pair of adjacent rooms with two directed edges, yielding a total of $|\cE|=3448$ edges.
The edge objective functions are zero.
To capture visibility relations, the edge constraints must ensure that there is direct line of sight between $\bx_v$ and $\bx_w$ whenever edge $e=(v,w) \in \cE$  is selected.
Enforcing this relation exactly would require the nonconvex constraint $(1 - \lambda) \bx_v + \lambda \bx_w\in \cX_v \cup \cX_w$ for all $\lambda \in [0,1]$.
Instead, we enforce a simple sufficient condition: if edge $e$ is selected, then $\bx_w$ must lie in $\cX_v \cap \cX_w$.
This is achieved via the convex constraint set $\cX_e = \{(\bx_v, \bx_w): \bx_w \in \cX_v\}$.
The optimal solution of this problem is depicted in Figure~\ref{fig:surveillance}.

\begin{figure}
\centering
{\tiny \begin{tikzpicture}
\node[anchor=south west, inner sep=0] at (0,0) {\includegraphics[width=\columnwidth]{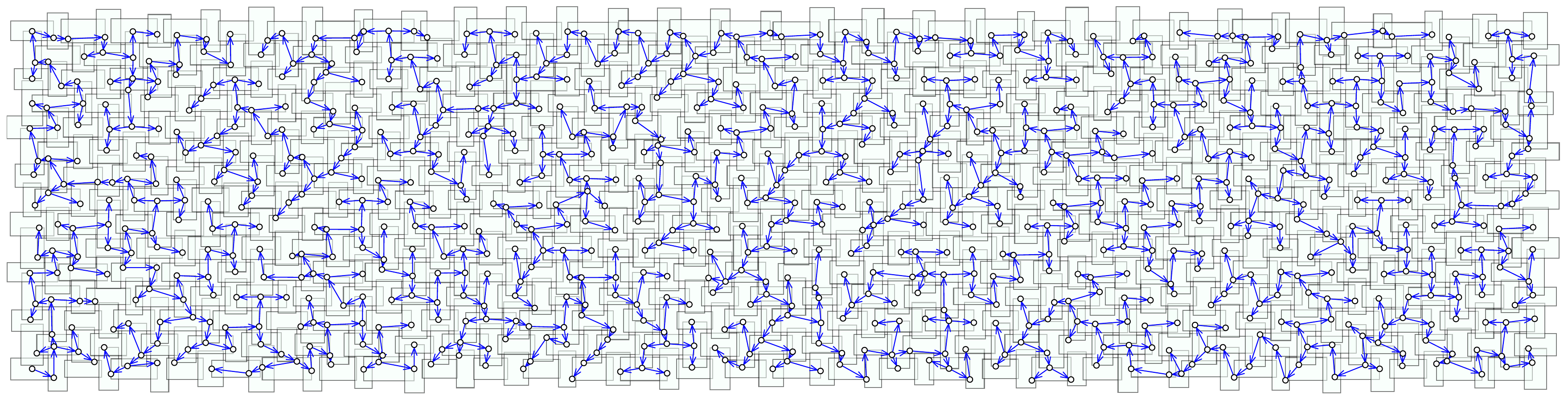}};
\def\x{0}
\def\y{0}
\def\w{1}
\def\h{1}
\def\z{2.5}
\def\dx{0}
\def\dy{3.2}
\draw[thick, red!70!black](\x,\y) rectangle ++(\w,\h);
\begin{scope}
\clip (\dx,\dy) rectangle ++(\w*\z,\h*\z);
\begin{scope}
\node[anchor=south west, inner sep=0] at (\dx-\x*\z,\dy-\y*\z) {\includegraphics[width=\z\columnwidth]{figures/surveillance.pdf}};
\end{scope}
\end{scope}
\draw[thick, red!70!black](\dx,\dy) rectangle ++(\w*\z,\h*\z);
\draw[dashed, thick, red!70!black] (\x+\w/2,\y+\h) -- (\dx+\z*\w/2,\dy);
\def\x{5.75}
\def\y{1}
\def\w{3.5}
\def\dx{\x+\w/2-\w/2*\z}
\draw[thick, red!70!black](\x,\y) rectangle ++(\w,\h);
\begin{scope}
\clip (\dx,\dy) rectangle ++(\w*\z,\h*\z);
\begin{scope}
\node[anchor=south west, inner sep=0] at (\dx-\x*\z,\dy-\y*\z) {\includegraphics[width=\z\columnwidth]{figures/surveillance.pdf}};
\end{scope}
\end{scope}
\draw[thick, red!70!black](\dx,\dy) rectangle ++(\w*\z,\h*\z);
\draw[dashed, thick, red!70!black] (\x+\w/2,\y+\h) -- (\dx+\z*\w/2,\dy);
\end{tikzpicture}}
\caption{
Optimal solution of the camera-positioning problem formulated as an MSAP in GCS.
Cyan rectangles are rooms and white dots are cameras.
Each camera is visible to at least another camera, with visibility relations represented by blue arrows.
Tracing the visibility arrows backward from any camera ultimately leads to the main room located in the bottom left.
}
\label{fig:surveillance}
\end{figure}

The MICP~\eqref{eq:micp_msap} is an MILP, with the cutset constraints~\eqref{eq:msap_cutset} enforced as lazy constraints.
For this problem, we found it more effective to eliminate all minimum-length cycles at each callback, rather than just one of them.
The problem has optimal value $25.5$ and the solver runtime is $54.6$~s.
Without the cutset constraints, the convex relaxation has optimal value $24.2$, i.e., the relaxation gap is only $5.0\%$.
This low relaxation gap is due to the strength of the original ILP formulation~\eqref{eq:msap} of the discrete MSAP.
The branch-and-bound progress is shown in Figure~\ref{fig:surveillance_bounds}: the lower bound is almost exact from the start and the solver converges immediately after finding a feasible solution.

\begin{figure}
\centering
\includegraphics[width=\columnwidth]{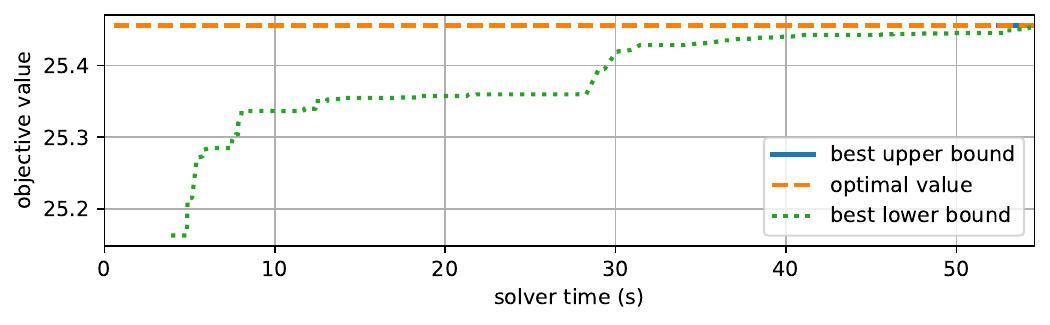}
\caption{
Bounds on the optimal value of the camera-positioning problem.
The lower bound is almost exact from the start (note the short range on the vertical axis), and the solver converges as soon as it finds a feasible solution.
}
\label{fig:surveillance_bounds}
\end{figure}

\subsection{Circle cover of two-dimensional robot link}
\label{sec:circle_cover}

Collision checking is a fundamental operation in physics engines for robotic simulation.
Given the angle of each joint, we must ensure that the robot does not collide with the environment or with itself.
Exact collision checks are often intractable, since robot links can have complex curved geometry.
A common workaround is to enclose each link with a set of simple shapes, typically spheres.
We formulate this approximation problem as an FLP in GCS.

We consider the simple two-dimensional robot link shown in Figure~\ref{fig:cover}.
This is described by a mesh with $I=17$ triangles, $\cT_i \subset \reals^2$ for $i=1,\ldots,I$.
We seek a collection of circles $\cC_j \subset \reals^2$, with $j=1,\ldots,J$, such that each triangle $\cT_i$ is contained in at least one circle $\cC_j$.
We denote by $\bc_j$ and $r_j$ the center and the radius of the circle $\cC_j$.
Among all possible solutions, we want one that minimizes the sum of the circle areas.
We impose a budget of at most $J = 5$ circles, but allow the solution to use fewer circles if advantageous.
The optimal solution is shown in Figure~\ref{fig:cover}, and uses the whole budget of five circles to cover the triangular mesh.

\begin{figure}
\centering
\includegraphics[width=.8\columnwidth]{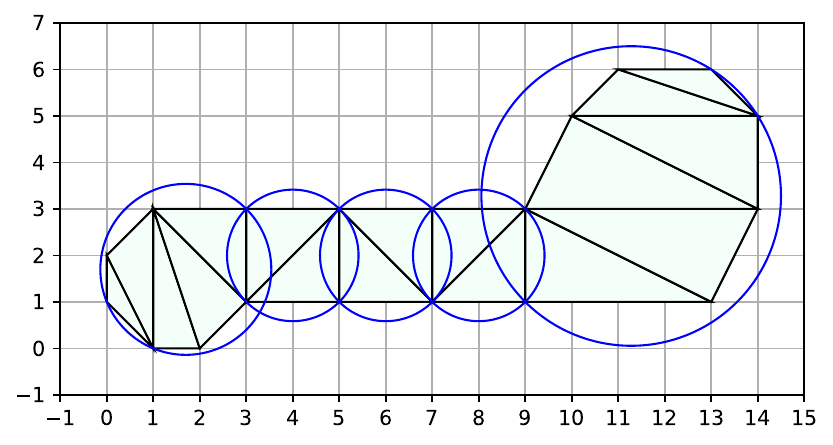}
\caption{Minimum-volume circle cover of a two-dimensional robot link represented by a triangular mesh. Problem solved as an FLP in GCS.}
\label{fig:cover}
\end{figure}

We formulate an FLP in GCS where the mesh triangles are clients, $\cC = \{1, \ldots, I\}$, and the cover circles are facilities, $\cB = \{1, \ldots, J\}$.
The total number of vertices is $|\cV| = I + J = 22$.
The clients do not need continuous variables but, to comply with our problem statement, we assign them an auxiliary continuous variable which we constrain to be zero.
Also the client objective functions are zero.
Each facility $j \in \cB$ is paired with the vector $\bx_j = (\bc_j, r_j) \in \reals^3$, and has objective function equal to the circle area $f_j(\bx_j) = \pi r_j^2$.
The set $\cX_j$ forces the center $\bc_j$ to lie in the smallest axis-aligned rectangle that contains the whole mesh, and lower bounds the radius $r_j$ with the radius of the smallest circle enclosing the smallest triangle in the mesh.
We do not enforce an upper bound on the radius $r_j$ (recall that, according to Assumption~\ref{ass:recession_v}, the superlinear growth of the objective $f_j$ with $r_j$ is sufficient for the validity of our MICP).

The GCS has $|\cE| = I J = 85$ edges, all with zero cost.
For each edge $e=\{i,j\}$, the set $\cX_e$ constrains the vertices of the triangle $\cT_i$ to lie in the circle $\cC_j$, ensuring that the solution of the FLP in GCS covers the whole mesh.

The quadratic objective and constraints make the MICP~\eqref{eq:micp_flp} an MISOCP.
The optimal value of this problem and its convex relaxation are $62.1$ and $22.2$, yielding a relaxation gap of $64\%$.
The relatively loose relaxation is this case inherited from ILP formulation~\eqref{eq:flp} of the ordinary FLP.
Despite this, the problem size is moderate, and the solver converges in $23.4$~s.

Figure~\ref{fig:cover_bounds} shows the progress of the branch-and-bound solver: the optimal solution is found in about $8$~s, but certifying optimality takes almost three times longer.
We also note that this MISOCP has $5|\cE|= 425$ quadratic constraints, which significantly burden the solver.
If we cover the mesh with squares instead of circles, we get an MISOCP with only $2|\cE|= 170$ quadratic constraints and the solver time decreases to approximately $5$~s.

\begin{figure}
\centering
\includegraphics[width=\columnwidth]{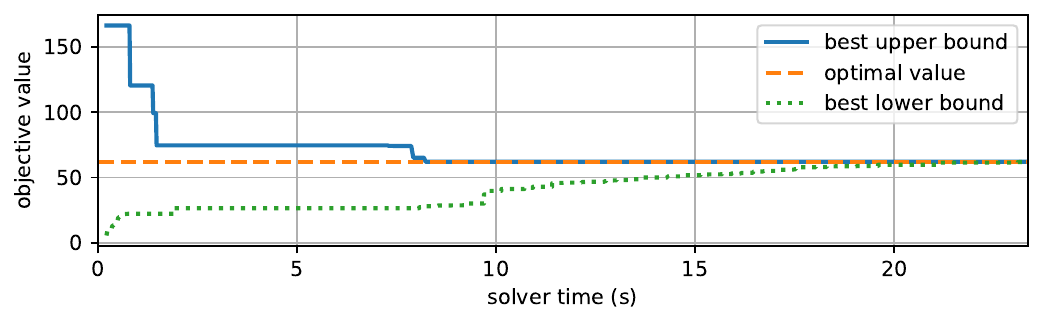}
\caption{
Bounds on the optimal value of the circle-cover problem.
An optimal cover is found in roughly $8$~s, and optimality is proven in about $23$~s.
}
\label{fig:cover_bounds}
\end{figure}

\subsection{Comparison with alternative global solution methods}
\label{sec:comparison}

We conclude this section with a comparison of our MICP formulation against two alternative approaches for solving GCS problems to global optimality:
\begin{itemize}
\item
A direct solution of the MINCP~\eqref{eq:mincp} using the spatial branch-and-bound algorithm available in \texttt{Gurobi~12}.
This algorithm has seen significant improvements in this solver release, and can handle nonconvex quadratic constraints.
This approach is representative of existing MINCP formulations for graph problems with neighborhoods (although, as noted in Section~\ref{sec:mincp}, our MINCP improves upon them in multiple directions).
\item 
A basic MICP formulation based on McCormick envelopes~\cite{mccormick1976computability} and solved with \texttt{Gurobi~12}.
For all vertices $v \in \cV$, we assume that the set $\cX_v$ is bounded and denote by $\cB_v \supseteq \cX_v$ be the smallest bounding box that contains it.
Using homogenization transformations, the standard McCormick envelope of the bilinear constraints~\eqref{eq:prod_vars} can be written as
\begin{align*}
& (\bz_v, y_v) \in \tilde \cB_v, \ (\bx_v - \bz_v, 1 - y_v) \in \tilde \cB_v, && v \in \cV, \\
& (\bz_v^e, y_e) \in \tilde \cB_v, \ (\bx_v - \bz_v^e, 1 - y_e) \in \tilde \cB_v, && v \in \cV, \ e \in \incident{v}.
\end{align*}
Replacing~\eqref{eq:gcs_bilin} with these constraints yields a correct MICP formulation of the GCS problem, which, however, is easily seen to be weaker than~\eqref{eq:micp} and our tailored MICPs.
\end{itemize}

We consider three benchmark problems, and for each problem we solve a sequence of increasingly large instances (all of which are feasible):
\begin{itemize}
\item 
The helicopter-flight problem from Section~\ref{sec:helicopter} (SPP in GCS), where we let the number of islands vary from $30$ to $300$ at increments of $30$.
In parallel, we increment the horizontal size of the archipelago from $0.5$ to $5$ at increments of $0.5$.
\item 
The school-bus problem from Section~\ref{sec:bus} (TSP in GCS), where we let the number of kids vary from $2$ to $18$ at increments of $2$.
\item 
The camera-positioning problem from Section~\ref{sec:surveillance} (MSAP in GCS), where we vary the number of rooms in each row of the grid from $I_{\min}=5$ to $I_{\max}=60$ at increments of $5$, and we keep the number of rooms in each column equal to $J = 15$.
Hence, the total number of rooms varies from $I_{\min} J = 75$ to $I_{\max}J = 900$.
\end{itemize}

Figure~\ref{fig:comparison} reports the runtimes of the three solution methods for each benchmark problem.
A missing marker indicates that the solver reached a time limit of $10^3$~s.
(We report that all the solves that reached this limit did not result in a feasible solution.)
The proposed MICP is consistently faster than the baselines, with the performance gap widening as the problem size increases.
In the helicopter-flight problem, our MICP solves all instances in at most a hundred seconds.
The MINCP approach can solve only the four smallest instances within the time limit, and the McCormick formulation only the smallest one.
A large performance gap appears also in the school-bus problem.
Our MICP solves all instances within a few tens of seconds, while, on the largest instance, the MINCP and McCormick approaches do not find a feasible solution within the time limit.
All methods perform well on the camera-positioning problem, but our MICP is still about five times faster across all instances.

\begin{figure}
\centering
\includegraphics[width=\columnwidth]{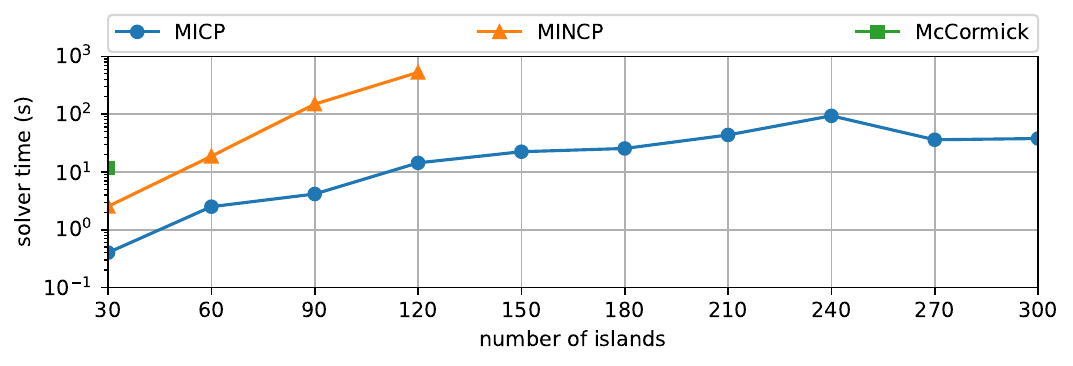} \\
\includegraphics[width=\columnwidth]{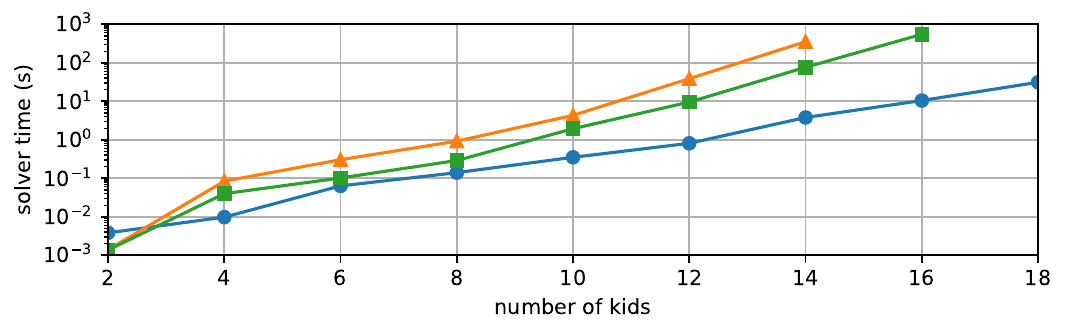} \\
\includegraphics[width=\columnwidth]{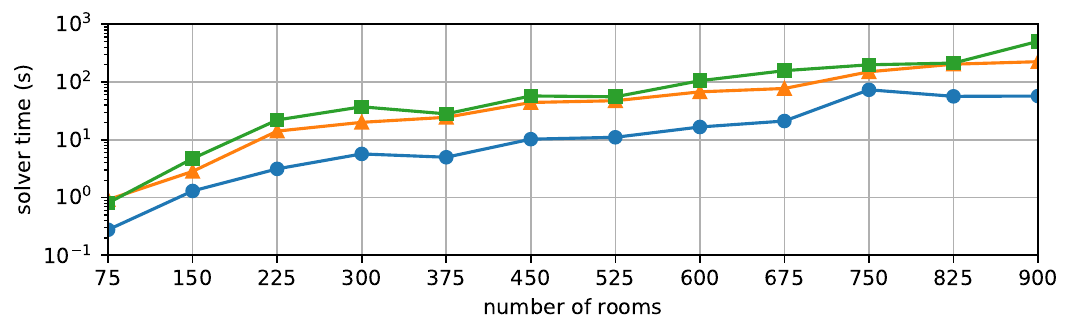}
\caption{Comparison between our MICP, a direct solution of the MINCP~\eqref{eq:micp} using spatial branch and bound, and an MICP based on McCormick envelopes.
The three approaches are evaluated on GCS problems of increasing size.
\emph{Top:} helicopter-flight problem from Section~\ref{sec:helicopter} (SPP in GCS).
\emph{Center:} school-bus problem from Section~\ref{sec:bus} (TSP in GCS).
\emph{Bottom:} camera-positioning problem from Section~\ref{sec:surveillance} (MSAP in GCS).}
\label{fig:comparison}
\end{figure}

\section{The \texttt{GCSOPT} Python library}
\label{sec:gcsopt}

This section presents \texttt{GCSOPT}: a Python library for formulating and solving GCS problems.
We illustrate the high-level goals of \texttt{GCSOPT}, its usage, and the operations it performs behind the scenes.
The library is freely available at
$$
\text{\url{https://github.com/TobiaMarcucci/gcsopt}},
$$
and can be installed via \texttt{PyPI} by running the following command in a terminal.
\begin{lstlisting}[style=bash]
pip install gcsopt
\end{lstlisting}

\subsection{High-level goals}

The primary goal of \texttt{GCSOPT} is to provide a simple framework for modeling and solving GCS problems, with emphasis on ease of use and fast prototyping.
The library offers a high-level interface that abstracts away most of the technical details discussed in this paper.
Users can define a GCS through a simple graph modeling interface combined with the syntax of \texttt{CVXPY}~\cite{diamond2016cvxpy}, a widely used Python library for convex optimization. 
All low-level operations (such as constructing the MICP, sending it to a solver, and retrieving the solution) are handled automatically behind the scenes.
By building on \texttt{CVXPY}, our library supports the definition of virtually any convex set and function, and is compatible with state-of-the-art solvers for mixed-integer optimization (such as \texttt{Gurobi}, \texttt{Mosek}, and \texttt{CPLEX}).

We emphasize that, although \texttt{GCSOPT} is open source, most mixed-integer solvers are not, and need to be installed separately.
For instance, the solver \texttt{Gurobi}, used in the experiments in Section~\ref{sec:examples}, is freely available only for academic use.
Nonetheless, competitive open-source mixed-integer conic solvers have begun to emerge (e.g., \texttt{Pajarito}~\cite{coey2020outer}), and these will be automatically accessible in \texttt{GCSOPT} as soon as a \texttt{CVXPY} interface becomes available.

We also mention that a fast implementation of the SPP in GCS is available in the open-source software \texttt{Drake}~\cite{tedrake2019drake}.
Compared to \texttt{Drake}, \texttt{GCSOPT} offers a lighter code base, a simpler and more efficient translation of GCS problems into MICPs, and can solve virtually any GCS problem.

\subsection{Illustrative example}

As a basic example of the usage of \texttt{GCSOPT}, we illustrate the Python code necessary to solve the SPP in GCS in Figure~\ref{fig:demo_spp}.

We start by importing the libraries \texttt{GCSOPT} and \texttt{CVXPY}.
The latter is used to define the convex sets and functions in our problem.

\begin{lstlisting}[style=python]
import gcsopt
import cvxpy
\end{lstlisting}

We initialize an empty directed GCS.

\begin{lstlisting}[style=python]
G = gcsopt.GraphOfConvexSets(directed=True)
\end{lstlisting}

We add $|\cV|=9$ vertices to the GCS, arranged on a square grid with side length $l=3$.
The vertex variables $\bx_v$ are constrained by the convex sets $\cX_v$ to lie within a circle of radius $r=0.3$ centered at $\bc_v = (i,j)$, for $i,j=0,\ldots,s-1$ (note that Python uses zero-based indexing).
In \texttt{GCSOPT}, every vertex must be assigned a name which, for example, can be used to retrieve the vertex from the graph instance.
Here, we name each vertex after its center $\bc_v$.

\begin{lstlisting}[style=python]
l = 3 # Grid side length.
r = 0.3 # Circle radius.
for i in range(l):
  for j in range(l):
    cv = (i, j) # Circle center and vertex name.
    v = G.add_vertex(cv) # Vertex with name cv.
    xv = v.add_variable(2) # Continuous variable of dimension 2.
    v.add_constraint(cvxpy.norm2(xv - cv) <= r) # Point in circle.
\end{lstlisting}

Next, we connect the vertices in the grid with directed edges.
The first two for loops below move through the grid, and retrieve each vertex $v$ using its name.
The variables associated with each vertex are stored in a list: to retrieve $\bx_v$ we select the zeroth and only element in that list.
In the third for loop, we connect vertex $v$ with the neighboring vertices $w$ on its right or above it.
Each edge $e=(v,w)$ has objective function $f_e(\bx_v, \bx_w) = \|\bx_w - \bx_v\|_2$.

\begin{lstlisting}[style=python]
for i in range(l):
  for j in range(l):
    cv = (i, j) # Name of vertex v.
    v = G.get_vertex(cv) # Retrieve vertex v from graph.
    xv = v.variables[0] # Get zeroth variable paired with v.
    neighbor_names = [(i + 1, j), (i, j + 1)] # Neighbors of v.
    for cw in neighbor_names:
      if G.has_vertex(cw): # Continue if a vertex is named cw.
        w = G.get_vertex(cw) # Retrieve vertex w from graph.
        xw = w.variables[0] # Get zeroth variable paired with w.
        e = G.add_edge(v, w) # Connect v and w.
        e.add_cost(cvxpy.norm2(xw - xv)) # Objective of edge e.
\end{lstlisting}

Now the GCS is fully specified, and we can solve the SPP through the MICP~\eqref{eq:micp_spp}.
This is formulated and solved automatically with the method \texttt{solve\_shortest\_path}.
The parameters of this method are the source vertex $\sigma$ and target vertex $\tau$: the source is in the bottom left and has center $\bc_\sigma = (0, 0)$, while the target is in the top right and has center $\bc_\tau = (l-1, l-1)$.

\begin{lstlisting}[style=python]
cs = (0, 0) # Source name.
ct = (l - 1, l - 1) # Target name.
s = G.get_vertex(cs) # Retrieve source vertex from graph.
t = G.get_vertex(ct) # Retrieve target vertex from graph.
G.solve_shortest_path(s, t)
\end{lstlisting}

After solving a GCS problem, \texttt{GCSOPT} automatically populates the graph with the problem result.
Below we print the optimal values of the problem and the variables paired with the vertices named $(0,1)$ and $(1,0)$.

\begin{lstlisting}[style=python]
print("Problem optimal value:", G.value)
print("Variable optimal values:")
vertex_names = [(0, 1), (1, 0)]
for cv in vertex_names:
  v = G.get_vertex(cv)
  xv = v.variables[0]
  print(cv, xv.value)
\end{lstlisting}

The last code snippet results in the following terminal output.
Observe that the second variable has no value since its vertex is not part of the optimal subgraph $H$ in Figure~\ref{fig:demo_spp}.

\begin{lstlisting}[style=bash]
Problem optimal value: 2.4561622478270677
Variable optimal values:
(0, 1) [0.24413563 0.82565037]
(1, 0) None
\end{lstlisting}

\texttt{GCSOPT} also provides basic plotting functions for visualizing a GCS and the solution of a problem.
These rely on the Python library \texttt{Matplotlib} and are limited to two-dimensional problems.
The following code generates Figure~\ref{fig:demo_spp}.

\begin{lstlisting}[style=python]
import matplotlib.pyplot as plt # Import library.
plt.figure() # Initialize empty figure.
G.plot_2d() # Plot GCS.
G.plot_2d_solution() # Plot optimal solution.
plt.show() # Show figure.
\end{lstlisting}

\subsection{Solving any GCS problem}
\label{sec:any_problem}

\texttt{GCSOPT} provides built-in functions for solving common GCS problems such as the SPP, TSP, MSTP, MSAP, and FLP.
Moreover, leveraging Algorithm~\ref{alg:constraint_generation}, it also allows the user to solve any nonstandard GCS problem of the form~\eqref{eq:gcs_problem} just by specifying a GCS and a polytope $\cY$.
As an example, let us show how the SPP in GCS illustrated above can be solved without using the method \texttt{solve\_shortest\_path}.

Starting from the graph \texttt{G} defined above, we initialize an empty list of affine ILP constraints that represent the polytope $\cY\pat$.
Then we add to this list a nonnegativity constraint for each edge binary variable, as in~\eqref{eq:path_poly_ye}.

\begin{lstlisting}[style=python]
ilp = [] # Initialize empty list of ILP constraints.
for e in G.edges:
  ye = e.binary_variable # Retrieve edge binary variable.
  ilp.append(ye >= 0) # Add constraint (18a).
\end{lstlisting}

Below are the other constraints defining the polytope $\cY\pat$, from~\eqref{eq:path_poly_yv} to~\eqref{eq:path_poly_start}.
The vertices \texttt{s} and \texttt{t} are the source and the target defined above.

\begin{lstlisting}[style=python]
for v in G.vertices:
  yv = v.binary_variable # Retrieve vertex binary variable.
  if v in [s, t]:
    ilp.append(yv == 1) # Constraint (18c).
  else:
    ilp.append(yv <= 1) # Constraint (18b).
  if v != s:
    # Sum binary variables of edges incoming vertex v.
    ye_inc = sum(e.binary_variable for e in G.incoming_edges(v))
    ilp.append(yv == ye_inc) # Constraint (18d).
  if v != t:
    # Sum binary variables of edges outgoing vertex v.
    ye_out = sum(e.binary_variable for e in G.outgoing_edges(v))
    ilp.append(yv == ye_out) # Constraint (18e).
\end{lstlisting}

The method \texttt{solve\_from\_ilp} in the following code snippet applies Algorithm~\ref{alg:constraint_generation} to the list \texttt{ilp} of affine constraints.
This automatically produces the MICP~\eqref{eq:micp_spp}, including all the constraints tailored to the SPP in GCS.
The problem is then solved, and the same optimal value as above is printed.

\begin{lstlisting}[style=python]
G.solve_from_ilp(ilp) # Solve MICP constructed by Algorithm 1.
print("Problem optimal value:", G.value)
\end{lstlisting}

The workflow just described allows the users of \texttt{GCSOPT} to easily solve complex GCS problems.
For instance, Figure~\ref{fig:inspection} shows the optimal solution of an inspection problem, where we seek a continuous closed curve of minimum Euclidean length that connects a set of designated rooms in a floor plan.
The designated rooms are red, while the other rooms are green.
Solid and dotted lines represent walls and open doors, respectively.
The optimal curve is in dashed blue.
The inspection problem can be formulated as a combination of an SPP and a TSP in GCS.
The SPP component allows us to compute minimum-length curves around obstacles as explained in~\cite{marcucci2023motion}.
The TSP component ensures that every designated room is visited at least once.
This mix of SPP and TSP constraints can be described in approximately $50$ lines of code and passed to the method \texttt{solve\_from\_ilp}, which produces the curve shown in Figure~\ref{fig:inspection}.
Note that this modeling effort is negligible and far less error prone than directly formulating the inspection problem as an MICP.

\begin{figure}
\centering
\includegraphics[width=.65\columnwidth]{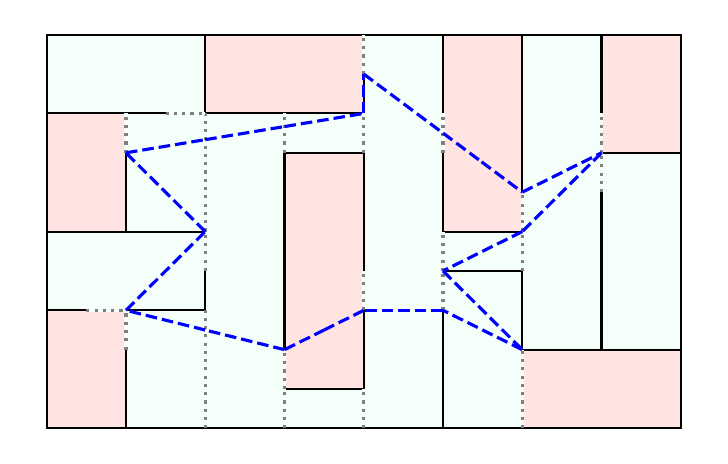}
\caption{
Inspection problem as a combination of an SPP and a TSP in GCS. 
We seek a closed curve (dashed blue) that visits a set of designated rooms (red) in a floor plan.
Solid black lines represent walls and dotted lines are doors.}
\label{fig:inspection}
\end{figure}

\subsection{Automatic formulation of the MICP}
\label{sec:gcsopt_micp}

At the core of \texttt{GCSOPT} is the observation that, as noted in~\cite[Section~2]{moehle2015perspective}, computing the homogenization of a set in conic form is particularly simple.
Recall that a closed convex set $\cX \subseteq \reals^n$ is described in \emph{conic form} if we are given a matrix $\bC \in \reals^{m \times n}$, a vector $\bd \in \reals^m$, and a closed convex cone $\cK \subseteq \reals^m$ such that
$$
\cX = \{\bx : \bC \bx + \bd \in \cK\}.
$$
For example, polyhedra, ellipsoids, and spectrahedra can all be described in conic from for appropriate choices of the cone $\cK$.
The homogenization of a closed convex set $\cX$ in conic form is simply
\begin{align}
\label{eq:homog_conic}
\tilde \cX = \{(\bx, y) : y \geq 0, \ \bC \bx + \bd y \in \cK\}.
\end{align}
(Note that here we do not take the closure as in Definition~\ref{def:homog_unbd}, since $\{\bx : \bC \bx \in \cK\}$ is exactly the recession cone $\cX^\infty$.)
Note also that this formula implies that, if we can efficiently optimize over a set $\cX$ in conic form, then we can also efficiently optimize over its homogenization $\tilde \cX$.

Using the formula~\eqref{eq:homog_conic}, the MICPs presented in this paper can be implemented in only a few tens of lines of code.
Following the steps in Section~\ref{sec:nonlinear}, we add a slack variable per vertex and edge to make all GCS objective functions linear.
Using \texttt{CVXPY} reductions and the disciplined-convex-programming rule set~\cite{grant2006disciplined}, vertex and edge constraint sets are automatically converted to conic form.
This allows us to easily enforce any constraint of the form~\eqref{eq:epigraph_valid_ineq_lift}, produced by our constraint-generation technique, as well as the edge constraint~\eqref{eq:epigraph_micp_e_hom}.

\subsection{Modeling guidelines for strong formulations}

\texttt{GCSOPT} provides a high-level interface for defining GCS problems that shields the user from the complexity of formulating an efficient MICP.
However, given the hardness of most GCS problems, and the worst-case exponential runtime of branch and bound, it is unavoidable that some of the user's modeling choices can have a noticeable effect on the solution times.
Below are simple guidelines that can help the user make the underlying MICP more efficient.

A first guideline is to keep the sets $\cX_v$ for $v \in \cV$ and $\cX_e$ for $e \in \cE$ as small as possible.
Adding vertex and edge constraints that cut unnecessary portions of these sets (but do not alter the MICP optimal value and are computationally light) can tighten the convex relaxation and accelerate the branch and bound.
For example, in the circle-cover problem in Section~\ref{sec:circle_cover}, our sets $\cX_v$ constrain each circle to be no smaller than the minimum enclosing circle of the smallest mesh triangle.
A simpler constraint would only require the radii to be nonnegative, but this would slow down the MICP solve by roughly $30\%$.

When solving nonstandard GCS problems via the method \texttt{solve\_from\_ilp}, it is important to recall that Algorithm~\ref{alg:constraint_generation} strengthens the MICP only through ILP constraints whose variables are associated with a common vertex.
Accordingly, whenever possible, it is more effective to express ILP constraints in this form rather than as ``global'' constraints involving larger groups of binary variables.
We also note that linear constraints that are redundant for the ILP can still strengthen the MICP, although they are not required for its correctness.
For example, for an MSAP with nonnegative weights, the ILP constraint~\eqref{eq:msap_degree} is redundant.
Hence, omitting it still produces a correct MICP if the GCS objective functions are nonnegative.
However, including it, together with the corresponding implied constraint~\eqref{eq:msap_spatial}, reduces the solve time of the camera-positioning problem in Section~\ref{sec:surveillance} by a factor of six.

\subsection{Edge variables}

To simplify the definition of certain constraints and objective functions, \texttt{GCSOPT} allows each edge $e \in \cE$ to be associated with auxiliary variables $\bx_e \in \reals^{n_e}$.
The edge constraint set and objective function are then $\cX_e \subseteq \reals^{n_v + n_w + n_e}$ and $f_e:\reals^{n_v + n_w + n_e} \rightarrow \reals$.
Although this can simplify the implementation, the resulting GCS problem is mathematically equivalent to the one in Section~\ref{sec:statement}.
Indeed, we can define an equivalent edge constraint set as the projection of $\cX_e$ onto the subspace of the variables $\bx_v$ and $\bx_w$, and an equivalent edge objective function as the partial minimization of $f_e$ over the extra variable $\bx_e$.
These sets and functions satisfy all convexity, closure, and boundedness assumptions required by our framework (see, e.g.,~\cite[Section~3.2.5]{boyd2004convex} for convexity).
Thus, they can replace $\cX_e$ and $f_e$, eliminating the extra variables $\bx_e$.

\section{Conclusions and future works}
\label{sec:conclusions}

This paper introduces a unified methodology for solving GCS problems, extending the ideas from~\cite{marcucci2024shortest} beyond the SPP.
Given an ILP that models an optimization problem over a weighted graph, our method automatically constructs an efficient MICP formulation for the corresponding GCS problem.
We have implemented this framework in the Python library \texttt{GCSOPT} and demonstrated its applicability through a wide range of numerical examples.

Our experiments show that the proposed MICPs often retain the strength of the ILP formulations that they build upon.
For problems such as the SPP and MSAP in GCS, the convex relaxations of our MICPs provide tight lower bounds, and the branch-and-bound solver converges in a few iterations.

As future work, we highlight that our library currently relies on general-purpose branch-and-bound solvers.
We expect that specialized optimization algorithms designed to exploit the graph structure underlying our problems could be substantially faster.
Furthermore, although already broadly applicable, the framework proposed in this paper admits several natural extensions.
It could be adapted to incorporate extended formulations~\cite{conforti2010extended} or semidefinite formulations of graph optimization problems.
It could also be extended to hypergraphs, i.e., graphs where edges can connect more than two vertices.
Beyond graphs, analogous methodologies may be developed for other classes of discrete optimization problems, such as Boolean satisfiability or equilibrium problems arising in game theory.

\appendix

\section{Proof of Proposition~\ref{prop:bmp}}
\label{sec:proof_bmp}

We use constraint~\eqref{eq:bmp_spatial} to eliminate the variables $\bz_v$ for $v \in \cV$.
After a few manipulations, the MICP convex relaxation is reduced to
\begin{subequations}
\label{eq:micp_bmp_reduced}
\begin{align}
\minimize \quad & 
\sum_{e = \{v,w\} \in \cE} (\bc_v^\top \bz_v^e + \bc_w^\top \bz_w^e + \bc_e^\top (\bz_v^e, \bz_w^e)) \\
\subjectto \quad
& \by \in \cY\match, \\
& (\bz_v^e, y_e) \in \tilde \cX_v, && v \in \cV,\ e \in \incident{v}, \\
& (\bz_v^e, \bz_w^e, y_e) \in \tilde \cX_e, && e = \{v,w\} \in \cE.
\end{align}
\end{subequations}
Given an edge $e=\{v, w\} \in \cE$, we isolate the objective term and the constraints that involve the variables $\bz_v^e$ and $\bz_w^e$.
This gives us the subproblem
\begin{align*}
\minimize \quad
& \bc_v^\top \bz_v^e + \bc_w^\top \bz_w^e + \bc_e^\top (\bz_v^e, \bz_w^e) \\
\subjectto \quad
& (\bz_v^e, y_e) \in \tilde \cX_v, \
(\bz_w^e, y_e) \in \tilde \cX_w, \
(\bz_v^e, \bz_w^e, y_e) \in \tilde \cX_e.
\end{align*}
For any fixed value of $y_e$, this subproblem can be solved independently of the rest of the convex relaxation.
By substituting $\bz_v^e = y_e \bx_v$ and $\bz_w^e = y_e \bx_w$, we see that its optimal value is equal to $y_e c_e$, where $c_e$ is the optimal value of~\eqref{eq:bmp_wegths}.
Therefore, the convex relaxation~\eqref{eq:micp_bmp_reduced} is equivalent to
\begin{align*}
\minimize \quad & 
\sum_{e \in \cE} c_e y_e \\
\subjectto \quad
& \by \in \cY\match.
\end{align*}
Since this is the convex relaxation of the ordinary BMP, which is exact as noted in Section~\ref{sec:bmp}, it follows that the convex relaxation of the MICP~\eqref{eq:micp_bmp} is also exact.

\bibliographystyle{spmpsci}
\bibliography{bibliography.bib}

\end{document}